\renewcommand {\a}{ \alpha }
\newcommand{\y}{\eta}
\newcommand{\G}{\Gamma}
\newcommand{\vark}{\varkappa}
\newcommand{\varf}{\varphi}
\renewcommand{\d}{\delta}
\newcommand{\D}{\Delta}
\newcommand{\s}{\sigma}
\newcommand{\Sg}{\Sigma}
\renewcommand{\l}{\lambda}
\renewcommand{\t}{\tau}
\newcommand{\Om}{\Omega}
\newcommand{\R}{ \mathbb R}
\newcommand{\Z}{ \mathbb Z}
\newcommand{\CL}{\mathcal L}
\newcommand{\CD}{\mathcal D}
\newcommand{\CE}{\mathcal E}
\newcommand{\CH}{\mathcal H}
\newcommand{\CP}{\mathcal P}
\newcommand{\CS}{\mathcal S}
\newcommand{\CV}{\mathcal V}
\newcommand {\GS}{\mathfrak S}
\newcommand {\ba}{\mathbf a}
\newcommand {\bd}{\mathbf d}
\newcommand {\bb}{\mathbf b}
\newcommand {\BA}{\mathbf A}
\newcommand {\BB}{\mathbf B}
\newcommand {\BI}{\mathbf I}
\newcommand {\BP}{\mathbf P}
\newcommand{\wh}{\widehat}
\DeclareMathOperator{\dom}{Dom} 
\DeclareMathOperator{\re}{Re}
\DeclareMathOperator{\comp}{comp}
\DeclareMathOperator{\fin}{fin}
\DeclareMathOperator{\pl}{pl}
\DeclareMathOperator{\Dom}{Dom}
\newtheorem{thm}{Theorem}[section]
\newtheorem{lem}[thm]{Lemma}
\theoremstyle{definition}
\theoremstyle{remark}
\newtheorem{rem}[thm]{Remark}
\numberwithin{equation}{section}
\newcommand{\thmref}[1]{Theorem~\ref{#1}}
\newcommand{\bsymb}{\boldsymbol}
\newcommand{\qf} {quadratic form }
\newcommand{\vs}{\vskip0.2cm}
\begin{document}
\title[Spectral estimates on spaces of small local dimension]
{On the spectral estimates for the Schr\"odinger type operators:\\
the case of small local dimension}
\author[G. Rozenblum]{G. Rozenblum}
\address{Department of Mathematics \\
                        Chalmers University of Technology
                        and  The University of Gothenburg \\
                         S-412 96, Gothenburg, Sweden}
\email{grigori@chalmers.se}
\author[M. Solomyak]{M. Solomyak}
\address{Department of Mathematics
\\ Weizmann Institute\\ Rehovot\\ Israel}
\email{solom@wisdom.weizmann.ac.il}

\subjclass[2000] {47A75; 47B37, 34L15, 34L20}
\keywords{Eigenvalue  estimates, Schr\"odinger operator, Metric graphs, Local dimension,
Dimension at infinity.}
\dedicatory{In  memory of M.Sh. Birman, a Scientist and a Person}

\begin{abstract}
The behavior of the discrete spectrum of the Schr\"odinger operator $-\D - V$, in
quite a general setting, to a large extent is determined by the behavior of the corresponding
heat kernel $P(t;x,y)$ as $t\to 0$ and $t\to\infty$. If this behavior is powerlike, i.e.,
\[\|P(t;\cdot,\cdot)\|_{L^\infty}=O(t^{-\d/2}),\ t\to 0;
\|P(t;\cdot,\cdot)\|_{L^\infty}=O(t^{-D/2}),\ t\to\infty,\]
then it is natural to call the exponents $\d,D$ `{\it the local dimension}'
and `{\it the dimension at infinity}', respectively. The character of spectral estimates
depends on the relation between these dimensions. We analyze the case where
$\d<D$, insufficiently studied before. Our applications concern both combinatorial
and  metric graphs.
\end{abstract}
\maketitle

\section{Introduction}\label{intro}
One of the most influential papers by M.Sh.Birman has been
\cite{Bir61} (1961). The approach developed there, under
the name `The Birman-Schwinger Principle', has been the source of
inspiration and one of the main tools in the
  spectral analysis for Schr\"odinger type operators.

In  \cite{RS97} this tool was applied to
eigenvalue estimates for such operators in a very general
setting, and it turned out that these estimates depend essentially
on  two numerical characteristics of the operator, $\d$ and $D$, that can be called
the local dimension
 and the dimension at infinity. For the standard Schr\"odinger operator on $\R^d$,
these characteristics coincide with the dimension; in general, $\d\ne D$.

In the survey paper \cite{RS08} various relations between dimensions were discussed and the main attention was
given there to the effects appearing when $\d\ge D$. The simplest example with $\d<D$
is given by the lattice $\Z^d$ (i.e., by the discrete
Schr\"odinger operator); here $D=d$ and $\d=0$.
In this case some peculiarities in the spectral
distribution were discovered in \cite{RS09}. In particular, the large coupling constant eigenvalue estimates, that
are order sharp in $\R^d$, are not sharp in $\Z^d$ any more.
Deeper deliberations on the effects found in \cite{RS09} have lead the
authors to understanding that these peculiarities are common to
all situations when $\d<D$. In the present paper we consider the
cases of rather general combinatorial graphs, where $\d=0$, and quantum (metric) graphs, where  $\d>1$ is arbitrarily close to $1$; we restrict ourselves to the situation where $D>2$.
We find a range of spectral estimates for Schr\"odinger type
operators on such graphs. For quantum graphs, it
turns out that  such estimates are determined by the
corresponding estimates for the associated combinatorial graph,
which is rather unexpected since the quantum graph contains much
more `flesh'. This phenomenon is supported by Theorem 4.1, where the relations between dimensions of these graphs are established.
We find also criteria for the Birman-Schwinger operator to belong to various Schatten,
or `weak' Schatten
classes, and conditions for the validity of a Weyl type eigenvalue asymptotics.
The main results in this direction are Theorems 5.3 and 5.5.

The first-named author (G.R.) expresses his gratitude to The Weizmann Institute of Science for hospitality and support. The authors are grateful to T.A.~Suslina and the Referees for a careful reading and useful advices.

\section{Operators on general measure spaces}\label{set}
\subsection{Dimensions of a semigroup}Let $(X,\s)$ be a measure space with sigma-finite measure. We
denote $L^q(X)=L^q(X,\s)$ and
$\|\cdot\|_q=\|\cdot\|_{L^q(X)}$. Often we drop the symbol $X$ in our notation.
Let $\BA$ be a non-negative self-adjoint operator
in $L^2(X)$ and $\BP(t)=\exp(-\BA t)$ be the corresponding semigroup. We assume that for
any $t>0$ the operator $\BP(t)$ is positivity preserving and is bounded as acting
from $L^1$ to $L^\infty$. It is well known that under these assumptions
$\BP(t)$ is an integral operator whose kernel $P(t;x,y)$
(heat kernel) is well-defined for $t>0$ as a function
in $L^\infty( X\times X)$ (see \cite{RS97} for details). We denote
 \begin{equation*}  M_\BA(t)=\|P(t;\cdot,\cdot)\|_{L^\infty(X\times X)}.\end{equation*}
The above described class of operators $\BA$ (we use the notation $\CP$ for it) includes
the Laplacian, both in its continuous and discrete versions, and also many other important operators;
see e.g. \cite{RS97, MV}. For simplicity, we use below the term `Laplacian' for  any operator
$\BA\in\CP$.

The function $M_\BA(t)$ is non-increasing. Its main characteristics are
the behavior as $t\to 0$ and as $t\to\infty$. In this paper we always
assume that there are two non-negative exponents $\d,D$, such that
\begin{equation}\label{exp0}
    M_\BA(t)=O(t^{-\d/2}),\ t\to0;\qquad M_\BA(t)=O(t^{-D/2}),\ t\to\infty,
\end{equation}
and moreover,
\begin{equation}\label{exp1}
    D\ge\d,\qquad D>2.
\end{equation}
Denote by $\ba$ the quadratic form of $\BA$.
It follows from \eqref{exp0}, \eqref{exp1} that $M_\BA(t)\le Ct^{-D/2}$,
with some $C$, for all
$t\in(0,\infty)$, and hence, by the Varopoulos theory, see
\cite{VCS},
the last inequality is equivalent to the `embedding theorem'
\begin{equation}\label{embed}\|u\|^2_{L^p}\le C\ba [u],\ p=p(D)=2D/(D-2),\qquad \forall u\in\dom(\ba).\end{equation}
We denote by $H_\ba$ the completion of the domain of $\ba$ with
respect to the `$\ba$-norm'
$\|u\|_\ba=\sqrt{\ba[u]}$. By \eqref{embed}, $H_\ba$ can be realized as a space of functions on $X$
(embedded in $L^p$).

\subsection{General eigenvalue estimates}
Let $V\ge0$ be a measurable function on $X$. Under some additional assumptions, the operator
$\BA-V$ defined via its quadratic form $\ba[u]-\int_X V|u|^2d\s$  is self-adjoint, with negative spectrum  consisting of a finite number of eigenvalues of finite multiplicities. Below $N_-(\BA-V)$ stands for the total multiplicity of the negative spectrum of $\BA-V$. It is proved in \cite{RS97} that the number $N_-(\BA-V)$ can be conveniently estimated in terms of the function $M_\BA(t)$. This estimate is an abstract version of Lieb's approach to the proof of the
Rozenblum -- Lieb -- Cwikel (RLC) estimate.

The estimates for $N_-(\BA-V)$ depend on which one of the exponents $\d,D$ in \eqref{exp0} is bigger than the other one.
In particular, the following version of RLC estimate is valid (see, e.g., \cite{RS08}, Remark 1 in section 3.2, see also \cite{RS97} and especially section 3.1 there -- with $d$
replaced by $D$ -- for the details of the proof). We write it down for the operator with a large parameter $\a>0$ (the coupling constant)
incorporated.

\begin{thm}\label{rlc}
Suppose that $D\ge\d,\ D>2$. Then for any $V\in L^{D/2}(X)$ and any $\a>0$
the operator $\BA-\a V$ is well-defined, its negative spectrum is finite, and the following estimate is
satisfied:
\begin{equation}\label{estrlc}
    N_-(\BA-\a V)\le C\a^{D/2}\int_X V^{D/2}d\s,\qquad C=C(X,D).
\end{equation}
\end{thm}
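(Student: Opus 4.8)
The plan is to follow the Birman--Schwinger/Lieb route, exploiting the fact that \eqref{exp0}--\eqref{exp1} collapse into the single uniform bound $M_\BA(t)\le Ct^{-D/2}$ valid for \emph{all} $t\in(0,\infty)$, so that only the exponent $D$ enters; the local exponent $\d$ plays no role here, since $\d\le D$ forces the small-$t$ bound $t^{-\d/2}$ to be dominated by $t^{-D/2}$. First I would record well-definedness. Writing $p=p(D)=2D/(D-2)$, the Hölder conjugate of $p/2$ is exactly $D/2$, so \eqref{embed} and Hölder give $\int_X V|u|^2\,d\s\le\|V\|_{D/2}\,\|u\|_p^2\le C\|V\|_{D/2}\,\ba[u]$. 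Splitting $V=V_1+V_2$ with $\|V_1\|_{D/2}$ small and $V_2$ bounded makes $\int_X V|u|^2\,d\s$ infinitesimally form-bounded (indeed form-compact) relative to $\ba$; by KLMN the form $\ba[u]-\a\int_X V|u|^2\,d\s$ is then closed and bounded below and defines the self-adjoint operator $\BA-\a V$, whose essential spectrum is unchanged and lies in $[0,\infty)$, so the negative spectrum is discrete. Its finiteness will follow a posteriori from the estimate itself.

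Next, by the Birman--Schwinger principle the number of negative eigenvalues equals the number of eigenvalues exceeding $1$ of the compact operator $T=\a\,V^{1/2}\BA^{-1}V^{1/2}$, i.e. $N_-(\BA-\a V)=n_+(1;T)$ (compactness of $T$ follows from \eqref{embed} together with $V\in L^{D/2}$). I would then use the elementary bound, valid for any $\l\in(0,1)$,
\[ n_+(1;T)\le\frac{1}{1-\l}\,\operatorname{tr}(T-\l)_+, \]
which holds because, with $\mu_j$ the eigenvalues of $T$, $\operatorname{tr}(T-\l)_+=\sum_j(\mu_j-\l)_+\ge(1-\l)\,\#\{j:\mu_j>1\}$. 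The heart of the matter is to convert the right-hand side into an integral of the heat-kernel diagonal. Using $\BA^{-1}=\int_0^\infty\BP(t)\,dt$ one has $T=\a\int_0^\infty V^{1/2}\BP(t)V^{1/2}\,dt$, and Lieb's convexity inequality (the abstract, semigroup version carried out in \cite{RS97}, \S3.1, via the Trotter formula and Jensen's inequality, with $d$ there replaced by $D$) yields a trace bound of the form
\[ \operatorname{tr}(T-\l)_+\le C\int_X\Big(\int_0^\infty \g\big(\a t V(x)\big)\,P(t;x,x)\,\frac{dt}{t}\Big)\,d\s(x), \]
where $\g$ is a fixed non-negative profile built from the convex function $(\cdot-\l)_+$: it vanishes near $s=0$ and satisfies $\g(s)=O(s)$ as $s\to\infty$.

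It then remains to estimate the diagonal by $P(t;x,x)\le M_\BA(t)\le Ct^{-D/2}$ and to carry out the $t$-integration by the scaling substitution $s=\a tV(x)$. Since $t^{-D/2}\,\frac{dt}{t}=\big(\a V(x)\big)^{D/2}s^{-D/2}\,\frac{ds}{s}$, the inner integral factorizes as $\big(\a V(x)\big)^{D/2}\int_0^\infty \g(s)\,s^{-1-D/2}\,ds$. This one-dimensional integral converges at $0$ because $\g$ vanishes there, and at $\infty$ because $\g(s)=O(s)$ makes the integrand $O(s^{-D/2})$, which is integrable at infinity precisely when $D>2$; this is exactly where the hypothesis $D>2$ enters. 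Collecting terms gives $N_-(\BA-\a V)\le C\a^{D/2}\int_X V^{D/2}\,d\s$, which is \eqref{estrlc}; in particular the right-hand side is finite for $V\in L^{D/2}$, so the negative spectrum is finite as claimed.

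I expect the main obstacle to be the Lieb convexity step — justifying the passage from $\operatorname{tr}(T-\l)_+$ to the $(x,t)$-integral of $P(t;x,x)$ against the profile $\g$ — together with choosing $\g$ so that it simultaneously dominates the counting step function (keeping the Birman--Schwinger bound valid) and keeps the resulting $s$-integral convergent under only $D>2$. Everything else (the embedding \eqref{embed}, the reduction to the uniform kernel bound, and the final scaling) is routine once this inequality is in hand.
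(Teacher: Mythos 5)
Your proposal is correct and follows exactly the route the paper itself indicates: the paper gives no in-text proof of Theorem~\ref{rlc} but refers to the abstract Lieb-type argument of \cite{RS97}, \S3.1 (with $d$ replaced by $D$), which is precisely your scheme of reducing to the uniform bound $M_\BA(t)\le Ct^{-D/2}$, passing to the Birman--Schwinger operator, applying the trace/convexity inequality, and scaling out $\a V(x)$ so that $D>2$ guarantees convergence. No substantive gaps; the one step you flag as the main obstacle (the Lieb convexity inequality for the semigroup) is indeed the content of the cited reference rather than something reproved here.
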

It is convenient to formulate the estimates of this type in terms of the corresponding
Birman -- Schwinger operator $\BB_V$. Recall (see, e.g., \cite{Bir61, BS}) that $\BB_V$ is the
operator  in $H_\ba$ generated by the quadratic form
\begin{equation}\label{b}
    \bb_V[u]=\int_X V|u|^2d\s.
\end{equation}
 Due to the  inequality \eqref{embed}, for $V\in L^{D/2}$ this operator is well defined. The Rayleigh quotient for  $\BB_V$ is
\[ \bb_V[u]/\ba[u],\qquad u\in H_\ba;\]
its eigenvalue counting function is denoted by $n(s,\BB_V).$

The next result is an equivalent reformulation of \thmref{rlc}.
\begin{thm}\label{rlc1}
Under the assumptions of  \thmref{rlc}, $\BB_V\in\Sg_{D/2}$, and
\begin{equation}\label{rlsequiv}
    \|\BB_V\|_{\Sg_{D/2}}\le C\|V\|_{L^{D/2}}.
\end{equation}
\end{thm}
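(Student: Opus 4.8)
The two statements are tied together by the Birman--Schwinger principle. Since $\bb_V$ is linear in $V$ and $\BB_V$ is defined by the Rayleigh quotient $\bb_V[u]/\ba[u]$ on $H_\ba$, the operator $\BA-\a V$ acquires a negative eigenvalue exactly when some $u$ obeys $\bb_V[u]/\ba[u]>\a^{-1}$; by the variational principle this yields $N_-(\BA-\a V)=n(\a^{-1},\BB_V)$ for every $\a>0$. Writing $s=\a^{-1}$, \thmref{rlc} becomes precisely the counting-function bound
\[ n(s,\BB_V)\le C\,s^{-D/2}\|V\|_{L^{D/2}}^{D/2},\qquad s>0, \]
that is, the membership of $\BB_V$ in the weak class $\Sg_{D/2,\infty}$ with a quasi-norm bound. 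Conversely, the elementary inequality $s^{D/2}n(s,\BB_V)\le\sum_{\l_j(\BB_V)>s}\l_j(\BB_V)^{D/2}\le\|\BB_V\|_{\Sg_{D/2}}^{D/2}$ shows at once that \eqref{rlsequiv} implies \eqref{estrlc}. Hence the only genuine content of \thmref{rlc1} is the passage from the weak bound just displayed to the honest Schatten bound \eqref{rlsequiv}.

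To carry out that upgrade I would first symmetrize. The nonzero spectra of $\BB_V$ and of $\BK_V:=V^{1/2}\BA^{-1}V^{1/2}$ in $L^2(X)$ coincide (match the two eigenvalue equations through $u=\BA^{-1}V^{1/2}\psi$), so $\|\BB_V\|_{\Sg_{D/2}}=\|\BK_V\|_{\Sg_{D/2}}$. Factoring $\BK_V=TT^{*}$ with $T=V^{1/2}\BA^{-1/2}$ and using $\|TT^{*}\|_{\Sg_{D/2}}=\|T\|_{\Sg_{D}}^{2}$ reduces \eqref{rlsequiv} to the single-factor estimate
\[ \|V^{1/2}\BA^{-1/2}\|_{\Sg_{D}}\le C\,\|V\|_{L^{D/2}}^{1/2}. \]
Now $\BA^{-1/2}=c\int_0^\infty t^{-1/2}\BP(t)\,dt$ is an integral operator whose kernel is controlled, uniformly on $(0,\infty)$, by the global heat-kernel bound $M_\BA(t)\le Ct^{-D/2}$ furnished by \eqref{exp0}--\eqref{exp1}. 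The required estimate for $\|V^{1/2}\BA^{-1/2}\|_{\Sg_D}$ is then a Cwikel--Birman--Solomyak bound for an operator of the form (multiplication)\,$\times$\,(function of the Laplacian): one interpolates the weak-type information carried by the heat-kernel bound against a second, easier endpoint to recover the strong $\Sg_D$ norm. This is exactly the device behind \cite{RS97}; indeed the Lieb-type machinery quoted there yields a bound on $\mathrm{Tr}\,\BB_V^{D/2}$ directly, which is \eqref{rlsequiv}.

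The hard part is precisely this weak-to-strong passage, and the obvious shortcuts fail. Inserting the weak bound into $\mathrm{Tr}\,\BB_V^{D/2}=\tfrac{D}{2}\int_0^\infty s^{D/2-1}n(s,\BB_V)\,ds$ produces a logarithmically divergent integral at $s\to0$, reflecting the strict inclusion $\Sg_{D/2}\subsetneq\Sg_{D/2,\infty}$; so the estimate for a single fixed potential is never enough, and one must use its uniformity over all $V\ge0$. Nor can one retreat to the Hilbert--Schmidt norm: $\|V^{1/2}\BA^{-1/2}\|_{\Sg_2}^2=\int_X V\,(\BA^{-1} \text{ diagonal})\,d\s$ controls the wrong functional of $V$ rather than $\|V\|_{L^{D/2}}$, and for $2<D<4$ the target class $\Sg_{D/2}$ is in fact \emph{smaller} than $\Sg_2$. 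Thus the $\Sg_D$-norm of $V^{1/2}\BA^{-1/2}$ must be estimated directly; the strict inequality $D>2$ of \eqref{exp1}, which already underlies the embedding \eqref{embed} and the compactness of $\BB_V$, is exactly what makes the interpolation close and the Schatten series summable, in complete parallel with Cwikel's proof of the RLC inequality for $-\D$ on $\R^d$.
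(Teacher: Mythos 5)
Your opening paragraph is, in fact, the entire proof, and it coincides with the paper's (which is why the paper offers none beyond the phrase ``equivalent reformulation''): the Birman--Schwinger principle gives $N_-(\BA-\a V)=n(1/\a,\BB_V)$, so the bound \eqref{estrlc} for all $\a>0$ is literally the statement $n(s,\BB_V)\le C^{D/2}s^{-D/2}\|V\|_{L^{D/2}}^{D/2}$ for all $s>0$, i.e.\ $s_n(\BB_V)\le C\|V\|_{L^{D/2}}n^{-2/D}$. That \emph{is} \eqref{rlsequiv}, because in this paper $\Sg_p$ is defined as the \emph{weak} class of compact operators with $s_n(T)=O(n^{-1/p})$, quasi-normed by $\sup_n n^{1/p}s_n(T)$; the genuine Neumann--Schatten classes are denoted $\GS_p$ and play no role in Theorem~2.2.

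Everything after your first paragraph rests on misreading $\Sg_{D/2}$ as $\GS_{D/2}$, and the ``weak-to-strong upgrade'' you then set out to perform is not merely unnecessary but impossible. The bound $\|\BB_V\|_{\GS_{D/2}}\le C\|V\|_{L^{D/2}}$, equivalently $\mathrm{Tr}\,\BB_V^{D/2}\le C\|V\|_{L^{D/2}}^{D/2}$, is false already for $-\D$ on $\R^d$: the Weyl asymptotics $n(s,\BB_V)\sim c\,s^{-d/2}\int V^{d/2}$ give $s_n(\BB_V)\asymp n^{-2/d}$, so $\sum_n s_n^{d/2}$ diverges logarithmically --- precisely the divergence you noticed in $\int_0^\infty s^{D/2-1}n(s,\BB_V)\,ds$, which you took as a sign that more work was needed rather than as the counterexample it is. The paper makes the same point just below Theorem~2.2: for the Euclidean Laplacian $\BB_V$ belongs to no class smaller than $\Sg_{d/2}$. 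Your closing claim that Lieb's machinery ``yields a bound on $\mathrm{Tr}\,\BB_V^{D/2}$ directly'' is therefore also incorrect; it yields a bound on the counting function, i.e.\ on the weak quasi-norm. (Your Cwikel-style factorization $\BB_V\sim TT^*$ with $T=V^{1/2}\BA^{-1/2}$ would be the right skeleton if one did want the weak bound from scratch, but then the target is $\|T\|_{\Sg_D}$ in the weak sense, not $\GS_D$.) The fix is simply to stop after your first paragraph and delete the converse direction's appeal to $\sum_{\l_j>s}\l_j^{D/2}$, which again silently uses the strong norm.
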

We recall that $\Sg_p$ stands for the class of all compact operators with the powerlike estimate for
the $s$-numbers, $s_n(T)=O(n^{-1/p})$, see \cite{BSbook}, \S 11.6. Similar classes with
$o$ in place of $O$ are denoted by $\Sg_p^{(0)}$, they are closed in $\Sg_p$.  Below we  also use the standard Neumann -- Schatten
classes $\GS_p, 0<p\le\infty$.

Suppose now that $\d<D$. Then \eqref{exp0} implies $M_\BA(t)=O(t^{-q}),\ t\to0,\infty$, with any $q\in [\d/2,D/2]$.
 It follows that an estimate similar to \eqref{rlsequiv} but with any such exponent $q$ instead of $D/2$
(and with a constant depending on $q$) is also valid:
\begin{equation}\label{estrlc1}
     \|\BB_V\|_{\Sg_q}\le C\|V\|_{L^q},\qquad \d/2\le q\le D/2,\ q>1.
\end{equation}

In the case of the Euclidean Laplacian on $\R^d,\ d\ge3$ (here $D=\d=d$) the estimate \eqref{rlsequiv}
is known to be sharp, in the sense that for $V\not\equiv 0$ the operator $\BB_V$ cannot belong to any class, smaller than
$\Sg_{d/2}$. It was shown in \cite{RS09} that for the lattice Laplacian the situation is different:
$V\in\ell^{d/2}(\Z^d),\ d\ge3$, yields $\BB_V\in\Sg_{d/2}^{(0)}$ (or, in other terms,
$N_-(\BA-\a V)=o(\a^{d/2})$). Our next result shows that a similar fact takes place in the general case, provided that $D>\d$.
\begin{thm}\label{o} Suppose that in the assumptions of \thmref{rlc} we have $D>\d$. Then
$\BB_V\in\Sg_{D/2}^{(0)}$.
\end{thm}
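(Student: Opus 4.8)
The plan is to exploit the strict gap $\d<D$. It leaves room to pick an intermediate exponent $q_0<D/2$ for which the sharper bound \eqref{estrlc1} applies, and then to transfer the conclusion from a dense class of potentials to an arbitrary $V\in L^{D/2}$ by using that $\Sg_{D/2}^{(0)}$ is closed in $\Sg_{D/2}$.

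First I would record the elementary inclusion $\Sg_q\subset\Sg_{D/2}^{(0)}$ for every $q<D/2$: if $T\in\Sg_q$ then $s_n(T)=O(n^{-1/q})$, and since $1/q>2/D$ we get $n^{2/D}s_n(T)=O(n^{2/D-1/q})\to0$, i.e. $s_n(T)=o(n^{-2/D})$, which is exactly $T\in\Sg_{D/2}^{(0)}$. Because $D>\d$ and $D>2$, the interval $\bigl(\max(1,\d/2),\,D/2\bigr)$ is non-empty; I fix some $q_0$ in it. Then $q_0>1$, $q_0\ge\d/2$ and $q_0<D/2$, so $q_0$ lies in the admissible range of \eqref{estrlc1}.

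Next I would dispose of a dense subclass of potentials. If $V\in L^{D/2}\cap L^{q_0}$, then \eqref{estrlc1} with $q=q_0$ gives $\BB_V\in\Sg_{q_0}$, hence $\BB_V\in\Sg_{D/2}^{(0)}$ by the inclusion above. For an arbitrary $V\ge0$ in $L^{D/2}$, I approximate it by $V_k=\min(V,k)\chi_{X_k}$, where $X_k\uparrow X$ is an exhaustion by sets of finite measure (available since $\s$ is sigma-finite). Each $V_k$ is bounded and supported on a finite-measure set, so $V_k\in L^{q_0}$; and $V_k\uparrow V$ pointwise, whence $\|V-V_k\|_{L^{D/2}}\to0$ by dominated convergence (the majorant being $V^{D/2}\in L^1$).

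Finally I would pass to the limit. Since $V\mapsto\BB_V$ is linear, $\BB_V-\BB_{V_k}=\BB_{V-V_k}$, and \eqref{rlsequiv} gives $\|\BB_V-\BB_{V_k}\|_{\Sg_{D/2}}\le C\|V-V_k\|_{L^{D/2}}\to0$. Each $\BB_{V_k}$ lies in $\Sg_{D/2}^{(0)}$, and this class is closed in $\Sg_{D/2}$; therefore the limit $\BB_V$ belongs to $\Sg_{D/2}^{(0)}$ as well, which is the assertion. The argument is soft, so I do not anticipate a genuine obstacle; the only point needing care is the construction of the approximants, namely that they simultaneously gain the extra integrability $L^{q_0}$ (this is the role of the finite-measure support) and still converge to $V$ in the $L^{D/2}$-norm that controls the $\Sg_{D/2}$-distance.
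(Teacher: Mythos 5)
Your proof is correct and follows essentially the same route as the paper: fix an intermediate exponent $q_0\in(\max(1,\d/2),D/2)$, apply \eqref{estrlc1} on a dense subclass of potentials to get $\BB_V\in\Sg_{q_0}\subset\Sg_{D/2}^{(0)}$, and pass to general $V\in L^{D/2}$ using \eqref{rlsequiv} together with the closedness of $\Sg_{D/2}^{(0)}$ in $\Sg_{D/2}$. The only cosmetic difference is that you build the dense subclass by explicit truncations $V_k=\min(V,k)\chi_{X_k}$, whereas the paper uses $L^{D/2}\cap L^{\d/2}$ and cites the continuity theorem directly.
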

\begin{proof}
Fix a number $q\in(\max(\d/2,1),D/2)$. Functions $V\in L^{D/2}\cap L^{\d/2}$ belong to $L^q$ and form a dense subset in $L^{D/2}$. For such functions $V$, the estimate \eqref{estrlc1} implies $\BB_V\in\Sg_{q}\subset\Sg_{D/2}^{(0)}$.
By continuity (see \cite{BSbook}, Theorem 11.6.7), this inclusion carries over to any $V\in L^{D/2}$.
\end{proof}

The following result shows that the estimate \eqref{estrlc1} with any $q<D/2$ is not sharp, in
the sense that the class of admissible potentials can be considerably widened, and the operators
$\BB_V$ for $V\in L^q$  belong actually  to the class $\GS_q$ which is smaller than $\Sigma_q$.
Below $L^q_w$ stands
for the weak $L^q$-space (see, e.g., \cite{BergLof}).
\begin{thm}\label{weakD} Suppose that in the assumptions of
\thmref{rlc} we have $D>\d$. Then
for any $q\in(\max(\d/2,1),D/2)$
\[ V\in L^{q}_w\Longrightarrow \BB_V\in \Sg_{q};\qquad V\in L^{q}
\Longrightarrow \BB_V\in \GS_{q},\]
with the estimates
\begin{equation}\label{XX}
    \|\BB_V\|_{\Sg_{q}}\le C \|V\|_{L^{q}_w};\qquad
\|\BB_V\|_{\GS_{q}}\le C \|V\|_{L^{q}}.
\end{equation}
Equivalently,
\begin{equation}\label{estweak}
    N_-(\BA-\a V)\le C\a^{q}\|V\|^{q}_{L^{q}_w}
\end{equation}
and  $N_-(\BA-\a V)= o(\a^{q}) , \a\to\infty,$ for $ V\in L^{q}$.
\end{thm}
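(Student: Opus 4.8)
The plan is to derive both implications simultaneously by real interpolation, taking the family of estimates \eqref{estrlc1} as endpoints. The crucial structural observation is that the map $V\mapsto\BB_V$ is \emph{linear}: since $\bb_V[u]=\int_X V|u|^2\,d\s$ depends linearly on $V$ while the form $\ba$ fixing the metric of $H_\ba$ does not involve $V$, the Birman--Schwinger operator is a linear function of $V$, so the interpolation theory of linear operators applies. Moreover, because we assume $D>\d$ \emph{strictly}, the interval $(\max(\d/2,1),D/2)$ is a nonempty open interval, and any prescribed $q$ in it can be bracketed by exponents $q_0,q_1$ with $\max(\d/2,1)<q_0<q<q_1<D/2$, both still lying in the range where \eqref{estrlc1} is available.

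First I would record the two endpoint bounds. By \eqref{estrlc1}, the linear map $V\mapsto\BB_V$ is bounded from $L^{q_0}$ into $\Sg_{q_0}$ and from $L^{q_1}$ into $\Sg_{q_1}$; in Lorentz--Schatten notation the weak class is $\Sg_{q_i}=\GS_{q_i,\infty}$. Applying the real interpolation functor $(\,\cdot\,,\,\cdot\,)_{\theta,r}$ with $\theta$ fixed by $1/q=(1-\theta)/q_0+\theta/q_1$, I would invoke two standard identities: on the source side, real interpolation of the $L^p$-couple yields the Lorentz space $(L^{q_0},L^{q_1})_{\theta,r}=L^{q,r}$ (see \cite{BergLof}); on the target side, the interpolation of the Schatten couple yields $(\GS_{q_0,\infty},\GS_{q_1,\infty})_{\theta,r}=\GS_{q,r}$ (see \cite{BSbook}). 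Hence for every $r\in(0,\infty]$ the map $V\mapsto\BB_V$ is bounded from $L^{q,r}$ into the Lorentz--Schatten class $\GS_{q,r}$, with norm bounded in terms of the two endpoint constants.

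The two assertions of the theorem are the specializations $r=\infty$ and $r=q$. Taking $r=\infty$ gives boundedness from $L^{q,\infty}=L^q_w$ into $\GS_{q,\infty}=\Sg_q$, which is the first implication and the first estimate in \eqref{XX}; taking $r=q$ gives boundedness from $L^{q,q}=L^q$ into $\GS_{q,q}=\GS_q$, which is the second implication and the second estimate in \eqref{XX}. The equivalent eigenvalue formulation then follows from the Birman--Schwinger principle, which identifies $N_-(\BA-\a V)$ with the counting function $n(\a^{-1},\BB_V)$: the bound $\|\BB_V\|_{\Sg_q}\le C\|V\|_{L^q_w}$ translates into $n(s,\BB_V)\le C s^{-q}\|V\|^{q}_{L^q_w}$, i.e.\ \eqref{estweak}, while $\BB_V\in\GS_q$ forces the summability $\sum_n s_n(\BB_V)^q<\infty$, hence $s_n(\BB_V)=o(n^{-1/q})$, equivalently $N_-(\BA-\a V)=o(\a^{q})$.

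The step I expect to require the most care is the operator-ideal interpolation identity $(\GS_{q_0,\infty},\GS_{q_1,\infty})_{\theta,r}=\GS_{q,r}$: one must know that the real method applied to the couple of weak Schatten classes reproduces the full Lorentz--Schatten scale. This is the noncommutative analogue of the corresponding statement for weak-$L^p$ spaces, and it rests on the fact that the $K$-functional of a Schatten couple is governed by the decreasing rearrangement of the singular values exactly as in the function-space case, so that $\GS_{q_i}$ and $\GS_{q_i,\infty}$ generate the same interpolation scale. A subordinate technical point, needed so that the linear interpolation theorem applies to possibly signed or complex $V$, is that $|\bb_V[u]|\le\bb_{|V|}[u]$ yields $s_n(\BB_V)\le s_n(\BB_{|V|})$ by the variational principle, so the endpoint estimates \eqref{estrlc1} persist, with unchanged constants, for the linear map on the whole $L^{q_i}$-scale.
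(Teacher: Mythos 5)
Your proof is correct and follows exactly the route the paper indicates: the paper's entire proof is the one-line remark that the result follows from \eqref{estrlc1} by real interpolation, and you have simply supplied the standard details (linearity of $V\mapsto\BB_V$, the two endpoint bounds, the Lorentz and Lorentz--Schatten interpolation identities, and the specializations $r=\infty$ and $r=q$). No discrepancy to report.
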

The result follows from \eqref{estrlc1} by the real interpolation (see \cite{BergLof}).

\section{Combinatorial graphs}\label{combinatorial} In the rest of the paper
we consider operators on  graphs. In this section we treat
combinatorial graphs (notation $G$), and in the next two sections we discuss
metric graphs (notation $\G$).  In both cases we \emph{always} assume that the
graph is connected, has an infinite number of vertices, and has no loops, vertices with degree one, or multiple edges (see, e.g., \cite{H} for the main notions of the graph theory).
 We denote the set of edges by $\CE$ and the set of vertices by
$\CV$. The notation $v\sim v'$ means that the vertices $v,v'$ are connected by an edge
that we sometimes denote by $(v,v')$. We \emph{always} suppose that  degrees of all vertices are finite:
   $$ \deg(v)=\#\{v'\in\CV,v'\sim v\}<\infty,\ \forall v\in\CV.$$
With each edge $e\in\CE$ we associate
a  weight
$g_e>0$. We need such `weighted graphs' when dealing with  metric graphs in
 Sect. 4, 5.

On the set $\CV$ we consider the counting measure $\sigma:  \s(v)=1$ for any $v\in\CV$. The basic Hilbert space in this section is
$\ell^2(\CV)=L^2(\CV,\s)$; we write also $\ell^q=L^q(\CV,\s)$, $\ell^q_w=L^q_w(\CV,\s)$. The quadratic form
\begin{equation}\label{form}
    \ba_G[f]=\sum_{e\in\CE; e=(v,v')}g_e|f(v)-f(v')|^2,
\end{equation}
with the  domain $f\in\ell^2(\CV), \ba_G[f]<\infty$, defines in $\ell^2(\CV)$ a nonnegative
self-adjoint operator, $\BA=\BA_G=-\D_G$. In particular, if the weights $g_e$ and the degrees $\deg(v)$ are uniformly bounded, the operator $\BA$ is bounded. Due to the inclusions
$\ell^1(\CV)\subset \ell^2(\CV)\subset \ell^\infty(\CV)$, the
operators $\exp(-\BA t)$ are bounded
as acting from $\ell^1$ to $\ell^\infty$, so that
    $M_\BA(t)\le C.$
This means that $\d=0$. Our main assumption (cf. \eqref{exp0}, \eqref{exp1}) is that
\begin{equation}\label{D}
    M_{\BA_G}(t)=O(t^{-D/2}),\ t\to\infty,\qquad {\text{with some}}\ D>2.
\end{equation}
The corresponding inequality \eqref{embed}:
\begin{equation}\label{Sobolev}
    \|f\|_{\ell^p}^2\le C\ba_G[f],\qquad p=p(D)=2D(D-2)^{-1}
\end{equation}
is certainly satisfied for all $f$ with finite support. Hence, the closure $\CH(G)$ of
the set of all such functions in the metric
$\ba_G[f]$ is embedded in $\ell^p$. In notation of section \ref{set}, $\CH(G)$ plays the role of the space $H_\ba$. In what follows, $\CH_{\fin}(G)$ stands for the set of all finitely supported functions
in $\CH(G)$,  considered as a linear subspace
in $\CH(G)$.

There are many geometric and analytic criteria for the
relation \eqref{D} to hold. Without going into details, we
refer to  \cite{Coulh2001}, \cite{GrigHu08}, \cite{Saloff09}, where
such criteria are presented. An example of
graph satisfying  \eqref{D} is the integer lattice $\Z^d$; for each edge $e$ we take $g_e=1$. Here $D=d$,
which can be
checked by a direct computation of the heat kernel. This case was the object
of our study in the paper \cite{RS09}. Here we extend some of its results
to general graphs.

The following result, in its main part, is just a special case of Theorems 2.2 -- 2.4.\begin{footnote}{From now on, we formulate the results in the terms of the Birman-Schwinger operator only.}\end{footnote} The only essential novelty is that the condition $q>1$ that implicitly appears in  Theorem  2.4 is no longer necessary.
Note that now $\bb_V$ takes the form
\begin{equation}\label{bdiscr}
    \bb_V[f]=\sum_{v\in\CV}V(v)|f(v)|^2.
\end{equation}

\begin{thm}\label{thm1} Let \eqref{D} be satisfied.

$1^\circ$ Suppose $V\in\ell^{D/2}(\CV)$. Then
\begin{equation}\label{RLCdiscr}\|\BB_V\|_{\Sg_{D/2}}\le C\|V\|_{\ell^{D/2}},
\end{equation}
and in addition, $\BB_V\in\Sg_{D/2}^{(0)}$.

$2^\circ$ If $V\in \ell^{q}_w(G)$ $($or $V\in \ell^{q}(G)$ $)$ for some
$q\in(0,D/2)$ then $\BB_V\in\Sg_{q}$ $($resp.,
 $\BB_V\in\GS_{q}$ $)$, and the estimates \eqref{XX} hold true.
\end{thm}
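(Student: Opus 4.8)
The plan is to obtain everything except the range $0<q\le1$ of part $2^\circ$ directly from the abstract results of Section~\ref{set}, and then to treat that remaining range by exploiting the one feature special to graphs: the counting measure assigns positive mass to every vertex, so point evaluation is a bounded functional on $\CH(G)$.

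First I would record that the combinatorial setting satisfies \eqref{exp0} and \eqref{exp1} with $\d=0<D$ and $D>2$. Hence \thmref{rlc1} gives \eqref{RLCdiscr} together with $\BB_V\in\Sg_{D/2}$, \thmref{o} (whose hypothesis $D>\d$ holds) upgrades this to $\BB_V\in\Sg_{D/2}^{(0)}$, which proves $1^\circ$; and \thmref{weakD} yields all assertions of $2^\circ$ for $q\in(1,D/2)$, since there $\max(\d/2,1)=1$. The genuinely new content is thus $2^\circ$ for $0<q\le1$.

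For that range the key point is that, by \eqref{Sobolev} together with the elementary bound $|f(v)|\le\|f\|_{\ell^p}$, the point-evaluation functional $\phi_v\colon f\mapsto f(v)$ is bounded on $\CH(G)$, uniformly in $v$: $\|\phi_v\|^2_{\CH(G)^*}\le C$. Writing $\psi_v\in\CH(G)$ for its Riesz representative, so that $\|\psi_v\|^2_{\CH(G)}\le C$, formula \eqref{bdiscr} becomes
\[\bb_V[f]=\sum_{v\in\CV}V(v)\,|\langle f,\psi_v\rangle_{\CH(G)}|^2,\qquad\text{that is,}\qquad \BB_V=\sum_{v\in\CV}V(v)\,\psi_v\otimes\psi_v,\]
exhibiting $\BB_V$ as a superposition of rank-one operators whose single $s$-number is $|V(v)|\,\|\psi_v\|^2\le C|V(v)|$. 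For $0<q\le1$ I would then invoke the $q$-triangle inequality for the Schatten quasi-norm, $\|\sum_n T_n\|_{\GS_q}^q\le\sum_n\|T_n\|_{\GS_q}^q$, to obtain
\[\|\BB_V\|_{\GS_q}^q\le\sum_{v\in\CV}\bigl(|V(v)|\,\|\psi_v\|^2\bigr)^q\le C^q\|V\|_{\ell^q}^q,\]
which is the strong implication $V\in\ell^q\Rightarrow\BB_V\in\GS_q$ with the bound in \eqref{XX}. This argument no longer needs $q>1$, precisely because the positivity of the counting measure makes the rank-one reduction available.

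To obtain the weak implication $V\in\ell^q_w\Rightarrow\BB_V\in\Sg_q$ I would interpolate. By the previous step together with \thmref{weakD}, the linear assignment $\Phi\colon V\mapsto\BB_V$ maps $\ell^r\to\GS_r$ boundedly for every $r\in(0,D/2)$. Fixing $0<q_0<q<q_1<D/2$, the real method with second index $\infty$, using the identifications $(\ell^{q_0},\ell^{q_1})_{\theta,\infty}=\ell^q_w$ and $(\GS_{q_0},\GS_{q_1})_{\theta,\infty}=\Sg_q$ with $q^{-1}=(1-\theta)q_0^{-1}+\theta q_1^{-1}$, then gives $\Phi\colon\ell^q_w\to\Sg_q$, the remaining estimate in \eqref{XX}. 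The main obstacle is technical and confined to these last two steps: one must check that $\sum_v\|V(v)\,\psi_v\otimes\psi_v\|_{\GS_q}^q<\infty$ forces convergence of the rank-one series in the $\GS_q$ quasi-metric (so that the $q$-triangle inequality legitimately applies and the sum equals $\BB_V$ on the dense set $\CH_{\fin}(G)$), and that the real interpolation method and the Lorentz-type identifications above retain their validity for the quasi-Banach couples that arise once $q_0<1$. Both facts are standard, and they are exactly where the restriction $q>1$ of the abstract theory is bypassed.
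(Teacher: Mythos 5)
Your proposal is correct and follows essentially the route the paper intends: the paper reduces the main part of the statement to Theorems 2.2--2.4 exactly as you do, and for the only novelty (removing the restriction $q>1$) it omits the argument and refers to the proof for $\Z^d$ in \cite{RS09}, which is precisely the mechanism you supply --- boundedness of point evaluation on $\CH(G)$, the rank-one decomposition $\BB_V=\sum_{v}V(v)\,\psi_v\otimes\psi_v$ with the $q$-triangle inequality in $\GS_q$ for $q\le1$, and real interpolation for the weak classes. I see no gaps.
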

The proof  is the same as for  $G=\Z^d$, see \cite{RS09}, and we skip it.

In contrast to the general situation of section \ref{set}, for graphs it is also possible to obtain a lower  bound for $n(s,\BB_V)$ in terms of the distribution function for $V$, i.e.,
\[ \nu(\t, V)=\# E(\t,V);\ E(\t,V)=\{v\in\CV: V(v)>\t\}, \qquad \t>0.\]
This estimate does not require any
 assumptions about $V$. We need, however, two additional
assumptions about the graph $G$: the weights $g_e$ should be
uniformly bounded:
\begin{equation}\label{g_e}
    g_e\le g_0,
\end{equation}
and the degrees of the vertices should be
 uniformly
bounded:
\begin{equation}\label{deg}
    \deg(v)=\#\{v'\in\CV,v'\sim v\}\le \bd.
\end{equation}
\begin{thm}\label{snizu}
Let    \eqref{g_e}, \eqref{deg} be satisfied for a graph $G$. Then for any $V\ge0$
\begin{equation}\label{ocenkSnizu}
    n(s,\BB_V)\ge (\bd+1)^{-1}\nu(g_0(\bd+1) s, V),
\end{equation}
and therefore, for any $q>0$,
\begin{equation}\label{snizu w klassah}
    \|\BB_V\|_{\Sigma_{q}}\ge c \|V\|_{\ell^{q}_w},\; \|\BB_V\|_{\GS_{q}}\ge c
    \|V\|_{\ell^{q}};\; c=c(q,\bd,g_0)>0.
\end{equation}
\end{thm}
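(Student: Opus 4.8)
The plan is to establish the pointwise bound \eqref{ocenkSnizu} first and then deduce \eqref{snizu w klassah} from it by routine manipulations with distribution functions. For \eqref{ocenkSnizu} I would invoke the variational (Glazman) description of the counting function: $n(s,\BB_V)$ equals the maximal dimension of a subspace $L\subset\CH(G)$ on which the Rayleigh quotient obeys $\bb_V[f]>s\,\ba_G[f]$ for every $f\in L\setminus\{0\}$. Since $L$ is finite-dimensional, the form $\bb_V$ attains its minimum on the $\ba_G$-unit sphere of $L$, so a strict pointwise inequality on $L\setminus\{0\}$ does give $n(s,\BB_V)\ge\dim L$. It therefore suffices to exhibit, for $\t=g_0(\bd+1)s$, a subspace of dimension at least $(\bd+1)^{-1}\nu(\t,V)$ with this property.

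The subspace will be spanned by the indicators $\d_v$ of suitable vertices with $V(v)>\t$. The key geometric step is a greedy proper colouring: since $\deg(v)\le\bd$, the set $E(\t,V)=\{v:V(v)>\t\}$ splits into $\bd+1$ independent sets, the largest of which, call it $S_*$, satisfies $|S_*|\ge(\bd+1)^{-1}\nu(\t,V)$. For $f=\sum_{v\in S_*}c_v\d_v$ the independence of $S_*$ kills every edge contribution with both endpoints in the support, leaving
\[
\ba_G[f]=\sum_{v\in S_*}|c_v|^2\sum_{v'\sim v}g_{(v,v')}\le\bd g_0\,\|f\|_{\ell^2}^2,\qquad
\bb_V[f]=\sum_{v\in S_*}V(v)|c_v|^2>\t\,\|f\|_{\ell^2}^2 .
\]
Dividing, the Rayleigh quotient exceeds $\t/(\bd g_0)=(\bd+1)s/\bd>s$, so every nonzero $f$ in the span qualifies and \eqref{ocenkSnizu} follows. (One checks en route that $\d_v\in\CH_{\fin}(G)$, since $\ba_G[\d_v]=\sum_{v'\sim v}g_{(v,v')}\le\bd g_0<\infty$, and that the $\d_v$ with $v\in S_*$ are $\ba_G$-independent, the restriction of $\ba_G$ to their span being the positive diagonal form above.)

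To pass to \eqref{snizu w klassah} I would use the standard identities linking the $s$-numbers to the counting function. For the weak class one has $\|\BB_V\|_{\Sg_q}\ge\sup_{s>0}s\,n(s,\BB_V)^{1/q}$; feeding in \eqref{ocenkSnizu} and substituting $\t=g_0(\bd+1)s$ turns the right-hand side into a positive constant times $\sup_{\t>0}\t\,\nu(\t,V)^{1/q}=\|V\|_{\ell^q_w}$. For the Schatten class the layer-cake identity $\|\BB_V\|_{\GS_q}^q=\int_0^\infty qs^{q-1}n(s,\BB_V)\,ds$, combined with \eqref{ocenkSnizu} and the same substitution, produces a positive constant times $\int_0^\infty q\t^{q-1}\nu(\t,V)\,d\t=\|V\|_{\ell^q}^q$. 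In both cases the constant depends only on $q,\bd,g_0$, exactly as claimed.

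I expect the colouring step to be the only genuine obstacle. If one used the \emph{whole} set $E(\t,V)$ as the support, the edges internal to it would contribute terms $g_e|c_v-c_{v'}|^2$ of indefinite sign, forcing the crude estimate $|c_v-c_{v'}|^2\le 2(|c_v|^2+|c_{v'}|^2)$, which costs a spurious factor $2$ and destroys the clean form of \eqref{ocenkSnizu}; passing to an independent set is precisely what removes these cross terms and lets $\ba_G$ be read off as a diagonal form. Everything else — verifying the variational principle in the form used, the membership $\d_v\in\CH_{\fin}(G)$, and the two distribution-function identities — is standard.
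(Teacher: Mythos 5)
Your proof is correct and follows essentially the same route as the paper's: a proper $(\bd+1)$-colouring to extract an independent subset of $E(\t,V)$ of proportional size, the span of the corresponding delta functions on which $\ba_G$ and $\bb_V$ are simultaneously diagonal, and the variational principle, followed by the standard distribution-function identities for \eqref{snizu w klassah}. The only (harmless) deviation is that you bound $\sum_{e\ni v}g_e$ by $\bd g_0$ where the paper uses $(\bd+1)g_0$, giving a marginally sharper constant.
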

\begin{proof} It is well  known that the set $\CV$ can be broken
 into the union of no more than $\bd+1$ disjoint subsets $\CV_j$, so that no pair of
 vertices in the same subset is connected  by an edge. Therefore,
 for any fixed $\t>0$ the set $E(\t,V)=\{v\in\CV: V(v)>\t\}$ splits into the union of
no more than $\bd+1$ disjoint subsets $\Om_j=E(\t,V)\cap \CV_j$.
 For at least one of
 them, say $\Om_1$, we have $\#\Om_1\ge (\bd+1)^{-1}\nu(\t,V)$.
 Now, consider the subspace $\CL\subset \CH(G)$ generated by the
 functions $f_{v'}(v)=\d_{v,v'},\ v'\in\Om_1$.
These functions are mutually orthogonal
both in the metric \eqref{form} and with respect to the quadratic
form $\bb_V$ in \eqref{bdiscr}. So, for any $f(v)=\sum_{v'\in\Om_1}c_{v'}\d_{v,v'} \in\CL$ we have
\begin{equation}\label{snizu5}
\ba_G[f]=\sum_{v\in{\Om_1}} |c_{v}|^2\sum_{e\ni v}g_e\le
g_0(\bd+1) \sum_{v\in \Om_1}|c_v|^2,
\end{equation}
 while $\bb_V[f]=
\sum_{v\in\Om_1}|c_v|^2V(v)\ge \t \sum_{v\in\Om_1} |c_v|^2$. So we have constructed the subspace
$\CL,\ $  $\dim \CL\ge (\bd+1)^{-1}\nu(\t,V)$, on which
$$\bb_V[f]\ge \t g_0^{-1} (\bd+1)^{-1} \ba_G[f].$$
This implies \eqref{ocenkSnizu} by the variational principle. The estimates \eqref{snizu w klassah}
follow from \eqref{ocenkSnizu} in a standard way.
\end{proof}

\section{Metric graphs: upper estimates}\label{prel}
\subsection{The Laplacian and the decomposition of the space}Each edge $e$ of a metric graph $\G$ is
considered as a line segment of the length $l_e>0$. With $\G$ we associate
the combinatorial graph
$G=G(\G)$, with the same set of vertices $\CV$, the same set of
edges $\CE$, and the same connection relations. To any edge $e$ of $G(\G)$
we assign the weight $g_e=l_e^{-1}$. If $v\in \CV$, then $\CS(v)$ stands for its {\it star}:
$\CS(v)=\cup_{e\ni v}e$. If $e=(v,v')$, then we define $\CS(e)=\CS(v)\cup\CS(v')$.

The Lebesgue measure on the edges induces a measure on $\G$, and our
basic Hilbert space is $L^2(\G)$ with respect to this measure.

On  the space $H^1(\G)$ of continuous functions $\varf$ on $\G$, such that $\varf\in H^1(e)$
on each edge and $\int_\G(|\varf'|^2+|\varf|^2)dy<\infty$,
we consider the quadratic form
 \begin{equation}\label{QformGamma}
\ba_\G[\varf]:=\int_\G|\varf'(y)|^2 dy.
\end{equation}
The Laplacian $\BA_\G$ in $L^2(\G)$ is determined  by this quadratic form. It acts as $-\frac{d^2}{dy^2}$ on each edge. Its domain, $\Dom(\BA_\G)$, consists of all functions $\varf$ belonging to $H^2$ on each edge, continuous at all vertices, satisfying Kirchhoff conditions, and
such that
\[\sum_{e\in\CE}\|\varf\|^2_{H^2(e)}<\infty.\]
First we consider the relations between the exponents $\d,D$ for the semigroups
generated by the operators $\BA_\G$ in $L^2(\G)$ and $\BA_{G(\G)}$ in $\ell^2(G(\G))$.

To this end, let us consider two pre-Hilbert spaces that are linear subspaces in
the space $H^1_{\comp}(\G)$ of all compactly supported functions from $H^1(\G)$. One of them,
$H^1_{\comp,\pl}(\G)$, is formed by  functions linear on each edge;
the subscript $\pl$ stands for `piecewise-linear'. Any function
$\varf\in H^1_{\comp,\pl}(\G)$ is determined by its values $\varf(v)$ at the
vertices. Given a  sequence $f=\{f(v)\},\ v\in\CV$ with finite support, we denote by
$Jf$ the (unique) function in $H^1_{\comp,\pl}(\G)$, such that
$(Jf)(v)=f(v),\ \forall v\in\CV$. The mapping
$J$ defines an isometry between the pre-Hilbert spaces $H^1_{\comp,\pl}(\G)$
equipped with the metric $\ba_\G$, and $\CH_{\fin}(G)$, equipped with the
metric $\ba_G$ defined in \eqref{form}. By means of  this isometry
we identify   these pre-Hilbert spaces.

Another subspace is $H^1_{\comp,\CD}$  consisting of all functions
$\varf\in H^1_{\comp}(\G)$, such that $\varf(v)=0$ for all $v\in \CV$.
It is clear that
\begin{equation}\label{1}
   H^1_{\comp}=H^1_{\comp,\pl}\oplus H^1_{\comp,\CD}
\end{equation}
(the orthogonal decomposition in the metric  $\ba_\G$). We will
denote by $\varf_{\pl}$ and $\varf_\CD$ the components of a given element
$\varf$ with respect to this decomposition.
\subsection{Dimensions of the metric graph}
The following theorem  shows that the local dimension of the Laplacian on $\G$ is any number $\d>1$; it can be chosen  arbitrarily close to $1$ (that supports the intuitive understanding of the local dimension), while the  dimension at infinity is the same as it is for $G$. We  believe that the estimate (4.3) below is satisfied with
$\d=1$. However, a weaker result that
we prove in Theorem 4.1, $1^\circ$, is sufficient for our main
conclusions on the spectral estimates.
\begin{thm}\label{prop.dimension}
$1^\circ$ For any $\d>1$,
\begin{equation}\label{dimension0}
    M_{\BA_\G}(t)\le C(\d)t^{-\frac{\d}2},\ t\in(0,1).
\end{equation}
$2^\circ$ If the lengths of
the edges are uniformly bounded,
\begin{equation}\label{fromabove}
    l_e\le l_+,\ \forall e\in\CE,
\end{equation}
 and
$M_{\BA_G}(t)=O(t^{-\frac{D}2}),\ D>2$, as $t\to \infty$, then also $M_{\BA_{\G}}(t)=O(t^{-\frac{D}2})$.
\end{thm}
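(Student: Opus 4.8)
The plan is to reduce both statements to the on-diagonal kernel, using the standard identity $M_{\BA_\G}(t)=\sup_{x\in\G}P(t;x,x)=\|e^{-t\BA_\G/2}\|_{2\to\infty}^2$ (valid since $e^{-t\BA_\G}$ is positivity preserving and bounded $L^1\to L^\infty$), and then to feed each regime a suitable functional inequality, converting it back into a kernel bound by the Varopoulos theory of \cite{VCS} exactly as in \eqref{embed}. For $1^\circ$ the heuristic is that $\G$ is one-dimensional along each edge, so the model is the free line with $\d=1$, and the vertices cannot raise this exponent. I would first establish the Gagliardo--Nirenberg inequality
\[ \|u\|_{L^\infty(\G)}^2\le 2\,\|u\|_{L^2(\G)}\,\ba_\G[u]^{1/2},\qquad u\in H^1_{\comp}(\G), \]
with a \emph{universal} constant: choose $x^*$ maximizing $|u|$, run a simple path $P\subset\G$ from $x^*$ to a point outside $\operatorname{supp} u$, and integrate $(|u|^2)'$ along $P$; since a simple path meets each edge in a subset, Cauchy--Schwarz together with $\|u\|_{L^2(P)}\le\|u\|_{L^2(\G)}$ and $\|u'\|_{L^2(P)}\le\ba_\G[u]^{1/2}$ gives the claim. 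Applying it to $u=e^{-s\BA_\G}g$, where $\ba_\G[u]=\|\BA_\G^{1/2}u\|_2^2\le(2es)^{-1}\|g\|_2^2$ and $\|u\|_2\le\|g\|_2$, yields $\|e^{-s\BA_\G}\|_{2\to\infty}\le Cs^{-1/4}$, hence $M_{\BA_\G}(t)=O(t^{-1/2})$. The decomposition \eqref{1} plays a conceptual role here: $u_{\CD}$ lives on the Dirichlet-decoupled edges (genuinely one-dimensional) while $u_{\pl}$ is the piecewise-linear lift of a function on $G$, for which $\d=0$, so the vertices contribute nothing to the local dimension.

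The main obstacle in $1^\circ$, and the reason only $\d>1$ rather than the conjectured $\d=1$ is claimed, is that the displayed inequality must be applied to $u=e^{-s\BA_\G}g$, which need not be compactly supported: the path argument requires $u$ to reach a zero, i.e.\ that compactly supported functions form a core and that finite-energy functions decay along paths to infinity. This is automatic for metrically complete $\G$, but fails in general --- if $\sum_e l_e<\infty$ the graph is incomplete and functions of finite energy need not decay. To obtain a constant uniform over \emph{all} admissible graphs (arbitrarily short edges, unbounded degrees) I would instead localize $P(t;x,x)$ to the ball $B(x,\sqrt t)$ by the finite propagation speed of the wave equation for $\BA_\G$, estimate the local model, and absorb the exponentially small boundary error; the price of the localization is precisely the replacement of $t^{-1/2}$ by $t^{-\d/2}$ with $\d$ any number $>1$, which is \eqref{dimension0}.

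For $2^\circ$ I would prove the genuine Sobolev inequality on $\G$,
\[ \|\varf\|_{L^{p}(\G)}^2\le C\,\ba_\G[\varf],\qquad p=2D/(D-2),\ \ \varf\in H^1_{\comp}(\G), \]
and then invoke \cite{VCS} as in \eqref{embed} to conclude $M_{\BA_\G}(t)=O(t^{-D/2})$, in particular as $t\to\infty$. Using $\ba_\G[\varf]=\ba_\G[\varf_{\pl}]+\ba_\G[\varf_{\CD}]$ from \eqref{1}, I treat the pieces separately. On $\varf_{\CD}$, which satisfies Dirichlet conditions on each edge of length $l_e\le l_+$, the one-dimensional Sobolev--Poincar\'e inequality gives $\|\varf_{\CD}\|_{L^p(e)}\le C(l_+)\|\varf_{\CD}'\|_{L^2(e)}$, and summing over edges with $\|\cdot\|_{\ell^p}\le\|\cdot\|_{\ell^2}$ (as $p\ge2$) bounds $\|\varf_{\CD}\|_{L^p(\G)}$ by $C\,\ba_\G[\varf_{\CD}]^{1/2}$. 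On $\varf_{\pl}$ the isometry $J$ and the choice $g_e=l_e^{-1}$ give $\ba_\G[\varf_{\pl}]=\ba_G[f]$ with $f=\varf|_{\CV}$, so the combinatorial Sobolev inequality \eqref{Sobolev} yields $\|f\|_{\ell^p(\CV)}^2\le C\,\ba_\G[\varf_{\pl}]$; it then remains to bound $\|\varf_{\pl}\|_{L^p(\G)}$ (length measure) by $\|f\|_{\ell^p(\CV)}$ (counting measure), which on an edge $(v,v')$ reduces to $\int_e|\varf_{\pl}|^p\le l_e(|f(v)|^p+|f(v')|^p)$. Adding the two pieces by the triangle inequality in $L^p$ completes the Sobolev inequality.

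The main obstacle in $2^\circ$ is this last measure comparison: summing the edge bound produces the vertex weight $\sum_{e\ni v}l_e\le\deg(v)\,l_+$, so the transplantation of \eqref{Sobolev} is clean only when the degrees are bounded, or when the hypothesis \eqref{D} on $G$ is read as already absorbing this weighting. The assumption $l_e\le l_+$ is what keeps the two scales comparable and, crucially, forces every Dirichlet edge to carry a spectral gap $\ge\pi^2/l_+^2$; consequently $\varf_{\CD}$ contributes nothing to the slow $t^{-D/2}$ decay, and the large-$t$ behavior of $\G$ is governed entirely by the combinatorial graph $G$ --- which is exactly the phenomenon the theorem records.
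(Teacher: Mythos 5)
Your part $2^\circ$ is essentially the paper's proof: reduce to the Sobolev inequality \eqref{SobolevGamma} via Theorem II.3.1 of \cite{VCS}, split $\varf$ by the orthogonal decomposition \eqref{1}, handle $\varf_\CD$ edge by edge using \eqref{fromabove}, and transplant \eqref{Sobolev} through the isometry $J$ for $\varf_{\pl}$. The measure-comparison caveat you raise (the vertex weight $\sum_{e\ni v}l_e$) is real and is passed over silently in the paper as well; flagging it is fair and does not change the verdict on this half.

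Part $1^\circ$, however, has a genuine gap. Your primary argument --- the Gagliardo--Nirenberg inequality obtained by integrating $(|u|^2)'$ along a path from the maximum point to a zero --- would prove the stronger bound with $\d=1$, which the authors explicitly say they cannot establish; and you yourself correctly observe that it breaks down because $e^{-s\BA_\G}g$ is not compactly supported and finite-energy functions need not vanish at infinity (e.g.\ on graphs of finite total length, where constants lie in the form domain and violate the inequality outright). The fallback you offer --- localizing $P(t;x,x)$ to $B(x,\sqrt t)$ by finite propagation speed, ``estimating the local model,'' and asserting that the price of localization is precisely the replacement of $t^{-1/2}$ by $t^{-\d/2}$ --- is not an argument: no mechanism is given for why the exponent should degrade by an arbitrarily small amount, and the ``local model'' in a ball of a metric graph with arbitrarily short edges and large vertex degrees is itself the hard object. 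The paper's route is different and does close this: fix $s>0$ so that every $z\in\G$ lies on a simple path $S(z)$ of length $s$ (take $s$ equal to the minimal edge length at one fixed vertex $v_0$); the restriction $T_z$ is a contraction from $\Dom(\BA_\G^{1/2})$ to $H^1(S(z))$ and from $L^2(\G)$ to $L^2(S(z))$, hence by interpolation from $\Dom(\BA_\G^{\d/4})$ to $H^{\d/2}(S(z))$, and $H^{\d/2}(S(z))\hookrightarrow C(S(z))$ uniformly in $z$ precisely when $\d>1$ --- this embedding, which fails at $\d=1$, is where the restriction $\d>1$ actually enters. One then gets $(\BI+\BA_\G)^{-\d/4}:L^2\to L^\infty$ bounded and factors $\exp(-t\BA_\G)=t^{-\d/4}(\BI+\BA_\G)^{-\d/4}\bigl[(t(\BI+\BA_\G))^{\d/4}\exp(-t\BA_\G)\bigr]$ with the bracket uniformly bounded on $L^2$ for $t\in(0,1)$. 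Note that this works directly on $\Dom(\BA_\G^{1/2})$, so neither compact support nor metric completeness is needed. You would have to supply an argument of comparable substance to repair $1^\circ$.
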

\begin{proof}
$1^\circ$ It suffices to prove \eqref{dimension0} for $\d\in(1,2)$. One can find $s>0$, such that for any point $z\in \G$ there exists a simple path $S(z)$ in $\G$ containing $z$ and having length $s$. To show this, fix a vertex $v_0\in\CV$ and set $s$ as the minimal length of edges containing $v_0$. For   $z\in\CS(v_0)$ the statement is obvious, for any other  $z\in\G$, take as $S(z)$ a segment with length $s$, containing $z$, of an arbitrary  simple path connecting $z$ with $v_0$.  Further on we treat such a path $S(z)$ as an interval.

For a fixed $z\in\G$,
consider the operator $T_z$ mapping a function $\varf$ on $\G$ to its restriction to $S(z)$. The operator $T_z$ is obviously bounded as acting from $H^1(\G)=\Dom(\BA_\G^{\frac12})$ to $H^1(S(z))$ and from $L^2(\G)$ to $L^2(S(z))$, with norms not greater than $1$. By interpolation,  $T_z$ is bounded as acting from $\Dom(\BA_\G^{\frac{\d}4})$ to $H^{\frac{\d}2}(S(z)),$ for any $\d\in(0,2)$, again with    norm    not greater  than    $1$.  For $\d>1$ the space $H^{\frac{\d}2}(S(z))$ is embedded in $C(S(z))$, with the same norm of the embedding operator for all $z$. We use the fact that $z$ is arbitrary   to conclude that the operator
$(\BI+\BA_\G)^{-\frac{\d}4}$ is bounded as acting from $L^2(\G)$ to $L^\infty(\G)$. Next, we have $$\exp(-t\BA_\G)=t^{-\frac{\d}4}(\BI+\BA_\G)^{-\frac{\d}4}\left[(t(\BI+\BA_\G))^{\frac{\d}4}\exp(-t\BA_\G)\right].$$
By the spectral theorem, the operator in brackets is bounded in $L^2(\G)$ uniformly in $t\in(0,1)$ and, therefore,  $$\|\exp(-t\BA_\G)\|_{L^2(\G)\to L^\infty(\G)}=O(t^{-\frac{\d}4}),\ t\in (0,1).$$
 This estimate, together with its dual,  imply  \eqref{dimension0}; see \cite{RS97}, section 2.1, for details and further references.\\
$2^\circ$ By Theorem II.3.1 in \cite{VCS}, it is sufficient to
prove that the Sobolev inequality
\begin{equation}\label{SobolevGamma}
    \|\varf\|^2_{L^p}\le C(p)\ba_\G[\varf],\qquad p=p(D)=2D/(D-2),
\end{equation}
holds for any $\varf\in H^1_{\comp}(\G)$.
Since the decomposition \eqref{1} is orthogonal, it is sufficient
to establish  \eqref{SobolevGamma}
 separately for the components $\varf_{\pl}$ and $\varf_{\CD}$.
 For the term  $\varf_{\pl}=Jf$, its norm in $L^p(\G)$ is
 majorized by the norm of $f$ in $\ell^p(G)$, so the
 Sobolev inequality for $\varf_{\pl}$ follows from the corresponding inequality
 for $f$. For  $\varf_\CD$, due to \eqref{fromabove}, the required  Sobolev
 inequality holds on each edge, with a common constant,  and the
 summation gives \eqref{SobolevGamma}.
\end{proof}
So, the general results of Section 2 apply to the metric graphs.
However, the analysis carried out below gives somewhat more
complete and detailed picture.

\subsection{Birman-Schwinger operators }
As always, we suppose that  the combinatorial graph $G$ satisfies \eqref{D}. Hence, by Theorem \ref{prop.dimension} and \eqref{embed},
 the
space $\CH^1=\CH^1(\G)$,  defined as the closure of
$H_{\comp}^1(\G)$ in the metric $\ba_\G$, is a Hilbert space of functions, embedded in $L^p(\G)$.

 Taking closure of both
terms in the decomposition \eqref{1} in the same metric, we obtain the Hilbert spaces
$\CH^1_{\pl}$ and $\CH^1_\CD$, so that
\[ {\CH^1_\CD(\G)}={\sum_{e\in\CE}}^\oplus H^{1,0}(e)\]
and
\begin{equation}\label{1a}
    \CH^1(\G)=\CH^1_{\pl}\oplus\CH^1_\CD.
\end{equation}
The isometry $J$ extends to the isometry of $\CH(G)$ onto
$\CH^1_{\pl}$.
The quadratic form \eqref{b} in our case is
\begin{equation}\label{2}
    \bb_V[\varf]=\int_{\G}V(y)|\varf(y)|^2dy.
\end{equation}
In general, the decomposition \eqref{1a} does not reduce the corresponding
operator $\BB_V$. Still, we introduce the operators $\BB_{V,\pl}$
and $\BB_{V,\CD}$, acting in $\CH^1_{\pl}$ and $\CH^1_\CD$ respectively
and generated by the \qf \eqref{2} restricted to the corresponding
subspace. The spectral estimates for $\BB_V$ easily reduce to the
ones for these two operators. Indeed, it is clear that $\BB_V$ is
bounded (compact) if and only if these two operators possess this
property. Moreover, due to the inequality
\[ \bb_V[\varf]\le 2(\bb_V[\varf_{\pl}]+\bb_V[\varf_\CD]),\]
we have (in the case of compactness)
\begin{gather}\label{2m}
    \max\{n(s,\BB_{V,\pl}),\ n(s,\BB_{V,\CD})\}\le n(s,\BB_V)\le\\\nonumber
    n({s}/2,\BB_{V,\pl})+n({s}/2,\BB_{V,\CD}).
\end{gather}
Now we are ready to proceed to the upper spectral estimates for the
operators $\BB_{V,D}$
and  $\BB_{V,\pl}$. They are given in Lemmas 4.2, 4.3. The resulting
estimates for our original operator $\BB_V$ will be formulated in the
next section 5. The lower estimates, showing that
the result is sharp, are also derived in section 5.

The structure of the operator $\BB_{V,\CD}$ is simple:
\begin{equation}\label{3a}
    \BB_{V,\CD}={\sum_{e\in\CE}}^\oplus\BB_{V,e,\CD}
\end{equation}
where $\BB_{V,e,\CD}$ stands for the operator in $H^{1,0}(e)$,
generated by the \qf similar to \eqref{2}, with the integration
over the edge $e$.

Consider now the \qf \eqref{2} for $\varf\in\CH^1_{\pl}(\G)$. Let
$f=\{f(v)\}$ be the restriction of $\varf$ onto $\CV$, i.e.,
$f(v)=\varf(v),\ \forall v\in\CV$. Then
\begin{equation}\label{2y}
    \bb_V[\varf_{\pl}]=\bb_V[Jf]=\sum_{e\in\CE}\int_e V(y)|(Jf)(y)|^2dy.
\end{equation}
The corresponding operator on $\CH^1_{\pl}$ is $\BB_{V,\pl}$.
Consider also the operator $\wh\BB_{V,\pl}$ in $\CH^1(G)$,
generated by the \qf $\wh{\bb}_V[f]\equiv\bb_V[Jf]$. It is clear that the operators
$\BB_{V,\pl}$ and $\wh\BB_{V,\pl}$ are unitarily equivalent.\vs

\subsection{Operator $\BB_{V,\CD}$}\label{Dir}
The orthogonal decomposition \eqref{3a} reduces the study of the
spectrum of $\BB_{V,\CD}$ to the same problem for a family of
finite intervals, and thus makes the task elementary. In what follows
we always assume that the condition \eqref{fromabove} is satisfied.\vs

We associate with $V$ the  sequence
\begin{equation}\label{2a}
    \bsymb\y_V=\{\y_V(e)\},\qquad \y_V(e)=l_e\int_e Vdy,\ e\in\CE.
\end{equation}
It is well known (see, e.g., the estimate (4.8) and Theorem 4.6 in \cite{BSlect},
where one has to take $l=m=1$), that
\begin{equation}\label{2k}
    n(\l,\BB_{V,e,\CD})\le C\l^{-1/2}\sqrt{\y_V(e)},\qquad \forall \l>0.
\end{equation}
and
\begin{equation}\label{2s} \l^{1/2}n(\l,\BB_{V,e,\CD})\to\frac1{\pi}\int_e\sqrt{V}dx,\qquad
\l\to 0.\end{equation}
Let $\nu(s,\bsymb\y_V)=\#\{e:\y_V(e)>s\},\ s>0$, be the
distribution function for the sequence \eqref{2a}. We say that
$\y_V\to0$ if $\nu(s,\bsymb\y_V)<\infty$ for any $s>0$.

The next statement follows from \eqref{3a}, due to \eqref{2k}, \eqref{2s}.
\begin{lem}\label{lD}
$1^\circ$ If $\bsymb\y_V\in\ell^\infty$ then
the operator $\BB_{V,\CD}$ is bounded and
\begin{equation}\label{2inf}
    \|\BB_{V,\CD}\|\le
C\|\bsymb\y_V\|_{\ell^\infty},\qquad C>0.
\end{equation}
If $\bsymb\y_V\to0$ then the operator $\BB_{V,\CD}$ is compact.

$2^\circ$ For any $q\in(\frac12,\infty)$,
\begin{equation}\label{2q}
    \|\BB_{V,\CD}\|_{\GS_q}\le
C(q)\|\bsymb\y_V\|_{\ell^q}, \ \|\BB_{V,\CD}\|_{\Sigma_q}\le
C(q)\|\bsymb\y_V\|_{\ell^q_w}.
\end{equation}

$3^\circ$ Let $\bsymb\y_V\in\ell^{1/2}$. Then
\begin{equation*}
    \|\BB_{V,\CD}\|_{\Sg_{1/2}}\le C\sum_{e\in\CE}\sqrt{\y_V(e)}
\end{equation*}
 and
\[ \l^{1/2}n(\l,\BB_{V,\CD})\to\frac1{\pi}\int_{\G}\sqrt{V}dy,\qquad
\l\to 0.\]
\end{lem}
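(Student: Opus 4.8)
The plan is to exploit the orthogonal decomposition \eqref{3a}, which reduces every spectral quantity for $\BB_{V,\CD}$ to the corresponding quantity for the one-dimensional pieces $\BB_{V,e,\CD}$, for which the sharp information \eqref{2k} and \eqref{2s} is already available. The basic fact is that for an orthogonal sum the singular numbers of the whole are the union, with multiplicity, of those of the summands, so that
\[ n(\l,\BB_{V,\CD})=\sum_{e\in\CE}n(\l,\BB_{V,e,\CD}),\qquad \l>0. \]
All three parts will follow by inserting \eqref{2k} (and, for the asymptotics, \eqref{2s}) into this identity and summing appropriately.

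For part $1^\circ$ I would first note that the left side of \eqref{2k}, being integer-valued, must vanish as soon as $C\l^{-1/2}\sqrt{\y_V(e)}<1$; hence $\|\BB_{V,e,\CD}\|\le C^2\y_V(e)$, and taking the supremum over $e$ gives \eqref{2inf}. If $\bsymb\y_V\to0$ then, since only finitely many $\y_V(e)$ exceed any positive threshold, the summand norms tend to zero, while each summand is compact by \eqref{2k}; an orthogonal sum of compact operators with norms tending to zero is a norm limit of compact operators, hence compact. Inverting \eqref{2k} also yields $s_n(\BB_{V,e,\CD})\le C\y_V(e)n^{-2}$, whence for $q>\tfrac12$ one has $\|\BB_{V,e,\CD}\|^q_{\GS_q}=\sum_n s_n(\BB_{V,e,\CD})^q\le C(q)\y_V(e)^q$; summing the $q$-th powers over $\CE$ gives the first inequality of \eqref{2q}. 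Summing \eqref{2k} itself over $\CE$ gives $\sup_\l\l^{1/2}n(\l,\BB_{V,\CD})\le C\sum_{e}\sqrt{\y_V(e)}$, which is the $\Sg_{1/2}$-bound of part $3^\circ$.

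The weak-class estimate in \eqref{2q} requires slightly more. Using the effective cutoff just noted, only edges with $\y_V(e)>c\l$ contribute, so \eqref{2k} gives $n(\l,\BB_{V,\CD})\le C\l^{-1/2}\sum_{e:\y_V(e)>c\l}\sqrt{\y_V(e)}$. I would then estimate the truncated sum by a layer-cake argument in terms of the distribution function: for $\bsymb\y_V\in\ell^q_w$ one has $\nu(s,\bsymb\y_V)\le\|\bsymb\y_V\|^q_{\ell^q_w}s^{-q}$, and since $q>\tfrac12$ a rearrangement estimate bounds $\sum_{e:\y_V(e)>t}\sqrt{\y_V(e)}$ by $C(q)\|\bsymb\y_V\|^q_{\ell^q_w}t^{1/2-q}$. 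Taking $t=c\l$ turns the displayed inequality into $n(\l,\BB_{V,\CD})\le C(q)\|\bsymb\y_V\|^q_{\ell^q_w}\l^{-q}$, which is exactly the $\Sg_q$-estimate in \eqref{2q}. (This is the orthogonal-sum rule for weak classes; alternatively one may invoke the corresponding lemma from \cite{BSbook}.)

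The delicate point, which I expect to be the main obstacle, is the Weyl-type asymptotics in part $3^\circ$. Here I would write $\l^{1/2}n(\l,\BB_{V,\CD})=\sum_{e}\l^{1/2}n(\l,\BB_{V,e,\CD})$ and pass to the limit $\l\to0$ term by term using \eqref{2s}. The interchange of limit and summation is justified by dominated convergence on the counting measure of $\CE$: by \eqref{2k} each term is bounded, uniformly in $\l$, by $C\sqrt{\y_V(e)}$, and this majorant is summable precisely because $\bsymb\y_V\in\ell^{1/2}$. The limiting sum equals $\frac1\pi\sum_e\int_e\sqrt V\,dx=\frac1\pi\int_\G\sqrt V\,dy$, a finite quantity since $\int_e\sqrt V\,dx\le(l_e\int_eV\,dx)^{1/2}=\sqrt{\y_V(e)}$ by Cauchy--Schwarz. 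The only subtlety is that for each fixed $\l>0$ the series has finitely many nonzero terms (only finitely many $\y_V(e)$ exceed $c\l$), so the dominated-convergence argument applies with no circularity.
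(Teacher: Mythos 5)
Your proof is correct and follows essentially the same route as the paper: both rest on the orthogonal decomposition \eqref{3a} combined with the one-dimensional bounds \eqref{2k} and \eqref{2s}, summed over the edges. The paper writes out only the $\Sigma_q$ case of part $2^\circ$ (via an enumeration with $\y_V(e_j)\le Mj^{-1/q}$ and a cutoff in $j$, which is your distribution-function computation in different clothing) and declares the remaining parts standard; you supply exactly those standard details, correctly.
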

\begin{proof}
The reasoning  is rather standard, see, e.g., \cite{NaiSol02}, and we
prove only the statement 2$^\circ$ for the classes $\Sigma_q$. If
$\bsymb\y_V\in\ell^{q}_w$, then, after an appropriate enumeration
of edges, $e_j$, we have :
\[ \y_V(e_j)\le Mj^{-1/q}.\]
 Hence, by \eqref{2k},
  \[n(\l,B_{V,e_j,\CD})\le
 CM^{1/2}\l^{-1/2}j^{-1/2q}.\]
 In particular, $n(\l,B_{V,e_j,\CD})=0$ if $j>C^{2q}M^q\l^{-q}$.
 Therefore,
 \[n(\l,B_{V,\CD})=\sum_e n(\l,B_{V,e,\CD})\le
 CM^{1/2}\l^{-1/2}\sum_{j\le C^{2q}M^q\l^{-q}}j^{-1/2q}\]
and, since $2q>1$,
\[n(\l,B_{V,\CD})\le C'M^q\l^{-q}, \]
whence the result.
\end{proof}

\subsection{Operator $\BB_{V,\pl}$}\label{pl}
We compare our operator $\BB_{V,\pl}$ (or, equivalently,
$\wh\BB_{V,\pl}$) with the operator $\BB_{\pmb{\vark}_V}$, where the
discrete potential $\pmb\vark_V=\{\vark_V(v)\}$ is chosen in a
special way:
\begin{equation}\label{4}
    \vark_V(v)=\int_{\CS(v)}Vdy=\sum_{e\ni v}l_e^{-1}\y_V(e),\qquad \forall v\in \CV.
\end{equation}

Let us return to the \qf in \eqref{2y}. Choose an edge $e=(v,v')$.
Identifying $e$ with the interval $(0,l_e)$, we have, for
$f\in\CH(G)$,
\begin{gather*}
\int_e V(y)|(Jf)(y)|^2dy=l_e^{-2}\int_0^{l_e}V(y)|f(v)(l_e-y)+f(v')y|^2dy\\
\le\max\{|f(v)|^2,|f(v')|^2\}\int_e V(y)dy.
\end{gather*}
Summing up the integrals over all $e\in\CE$, we see that
\begin{equation}\label{est1}
    \wh\bb_V[f]\le\bb_{\pmb\vark_V}[f].
\end{equation}

This leads us to the following result.
\begin{lem}\label{lpl} Suppose the operator $\BB_{\pmb{\vark}_V}$ on the combinatorial graph
$G(\G)$ is bounded, compact, or lies in one of the classes $\GS_q,\
\Sg_{q}$, $q>0$. Then the same is true for the operator $\BB_{V,\pl}$,
and the estimate
\begin{equation}\label{6}
    \|\BB_{V,\pl}\|\le \|\BB_{\pmb{\vark}_V}\|
\end{equation}
holds in the $($quasi$)$-norm of the corresponding class.
\end{lem}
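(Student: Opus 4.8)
The plan is to deduce everything from the pointwise domination of quadratic forms recorded in \eqref{est1}, namely $\wh\bb_V[f]\le\bb_{\pmb\vark_V}[f]$ for all $f\in\CH(G)$, by combining it with the variational (min-max) principle and the monotonicity of the relevant (quasi-)norms with respect to the singular numbers. Since $\BB_{V,\pl}$ and $\wh\BB_{V,\pl}$ are unitarily equivalent, it is enough to prove all the assertions for $\wh\BB_{V,\pl}$; the advantage is that $\wh\BB_{V,\pl}$ and $\BB_{\pmb\vark_V}$ act in \emph{the same} Hilbert space $\CH(G)$, which is precisely why the hatted operator was introduced.

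The key step is to convert the form inequality into an operator inequality. Both $\wh\BB_{V,\pl}$ and $\BB_{\pmb\vark_V}$ are nonnegative self-adjoint operators in $\CH(G)$ whose quadratic forms, evaluated in the inner product of $\CH(G)$ (the polarization of $\ba_G$), coincide with $\wh\bb_V$ and $\bb_{\pmb\vark_V}$ respectively; this is the defining property of the Birman--Schwinger operator, whose Rayleigh quotient is $\bb[f]/\ba_G[f]$. Thus \eqref{est1} says exactly that $0\le\wh\BB_{V,\pl}\le\BB_{\pmb\vark_V}$. Introducing the min-max values $\mu_n(T)=\min_{\dim L<n}\ \max_{f\perp L,\,f\ne0}\langle Tf,f\rangle/\|f\|^2$, which are defined for any bounded nonnegative operator, the operator inequality gives $\mu_n(\wh\BB_{V,\pl})\le\mu_n(\BB_{\pmb\vark_V})$ for every $n$.

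All the conclusions now follow from this termwise domination. If $\BB_{\pmb\vark_V}$ is bounded then $\|\wh\BB_{V,\pl}\|=\mu_1(\wh\BB_{V,\pl})\le\mu_1(\BB_{\pmb\vark_V})=\|\BB_{\pmb\vark_V}\|$, which is \eqref{6}. If $\BB_{\pmb\vark_V}$ is compact then $\mu_n(\BB_{\pmb\vark_V})\to0$, hence $\mu_n(\wh\BB_{V,\pl})\to0$, so $\wh\BB_{V,\pl}$ is compact; in that case $\mu_n$ coincides with the singular number $s_n$, and $s_n(\wh\BB_{V,\pl})\le s_n(\BB_{\pmb\vark_V})$ holds for all $n$. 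Since the quasi-norms of $\GS_q$ and $\Sg_q$ are non-decreasing functions of the sequence $\{s_n\}$ (see \cite{BSbook}, \S11.6), this yields $\|\wh\BB_{V,\pl}\|_{\GS_q}\le\|\BB_{\pmb\vark_V}\|_{\GS_q}$ and $\|\wh\BB_{V,\pl}\|_{\Sg_q}\le\|\BB_{\pmb\vark_V}\|_{\Sg_q}$. Transporting these through the unitary equivalence $\BB_{V,\pl}\cong\wh\BB_{V,\pl}$ completes the argument.

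The proof carries essentially no obstacle, since the whole content sits in \eqref{est1}, which is already established. The only matters needing a word of care are that the min-max comparison be invoked in the form valid for general bounded nonnegative operators (so that compactness of the dominated operator $\wh\BB_{V,\pl}$ is genuinely deduced rather than assumed), and that the form inequality \eqref{est1}, derived for finitely supported $f$, extends to all of $\CH(G)$ by continuity of both forms -- this also guarantees that $\wh\BB_{V,\pl}$ is well defined, since boundedness of $\BB_{\pmb\vark_V}$ forces $\wh\bb_V[f]\le C\,\ba_G[f]$.
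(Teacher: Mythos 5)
Your proof is correct and is exactly the argument the paper intends: the lemma is stated immediately after \eqref{est1} with the words ``This leads us to the following result,'' i.e.\ the authors leave implicit precisely the chain (form domination $\Rightarrow$ operator inequality $0\le\wh\BB_{V,\pl}\le\BB_{\pmb\vark_V}$ $\Rightarrow$ termwise comparison of singular numbers via min-max $\Rightarrow$ monotonicity of the class quasi-norms) that you spell out. Your added remarks on extending \eqref{est1} from finitely supported $f$ by continuity and on deducing (rather than assuming) compactness are sensible points of care, but the route is the same.
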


\section{Metric graphs:  estimates in Schatten classes and asymptotics}\label{fin}
\subsection{Lower bounds}\label{low}
We start by a simple graph-theoretic lemma.
\begin{lem}\label{coloring} Let   condition \eqref{deg} be satisfied for  $G(\G)$.
Then the set $\CE$ can be split into
no more than $2\bd^2+1$ subsets $\CE_j$, such that
$\CS(e)\cap\CS(e')=\varnothing$ for any $e\ne e'$ in the
same $\CE_j$.
\end{lem}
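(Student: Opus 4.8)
The plan is to recast the assertion as an ordinary vertex-colouring problem and then to control the chromatic number through the maximum degree. I would introduce the \emph{conflict graph} $\GH$ whose vertex set is $\CE$ and in which two edges $e,e'$ are joined exactly when $\CS(e)$ and $\CS(e')$ are not disjoint. A proper colouring of $\GH$ with $k$ colours is, by definition, the same thing as a partition $\CE=\CE_1\cup\dots\cup\CE_k$ in which the stars of any two edges sharing a class are disjoint. Hence it suffices to colour $\GH$ with at most $2\bd^2+1$ colours, and for this I would use the elementary fact that a graph of maximum degree $\le M$ is $(M+1)$-colourable (a greedy colouring of the vertices). So the whole lemma reduces to the single estimate $\deg_{\GH}(e)\le 2\bd^2$ for every $e\in\CE$.

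To obtain this estimate I would first translate the geometric condition into a combinatorial one. Since the graph has no loops or multiple edges, two distinct edges meet only at a common vertex; writing $U_e=N[v]\cup N[v']$ for the union of the closed neighbourhoods of the endpoints of $e=(v,v')$, one checks that $\CS(e)$ and $\CS(e')$ share an edge precisely when some endpoint of $e'$ lies in $U_e$ (if $y=v$ the edges at $v$ are common, and if $y\sim v$ then $(v,y)$ is common; the converse is immediate). Thus the neighbours of $e$ in $\GH$ are among the edges incident to the vertex set $U_e$, and it remains only to count them.

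Because $v\sim v'$, both $v$ and $v'$ lie in $N[v]\cap N[v']$, so $|U_e|\le |N[v]|+|N[v']|-2=\deg(v)+\deg(v')\le 2\bd$ by \eqref{deg}. Every edge contributing to $\deg_{\GH}(e)$ is incident to at least one vertex of $U_e$, and each vertex is incident to at most $\bd$ edges, again by \eqref{deg}; hence the number of such edges is at most $\sum_{z\in U_e}\deg(z)\le \bd\,|U_e|\le 2\bd^2$. This yields $\deg_{\GH}(e)\le 2\bd^2$, and the greedy colouring then furnishes the partition into at most $2\bd^2+1$ classes with pairwise disjoint stars.

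The step I expect to be delicate is precisely the characterisation of adjacency in $\GH$, i.e.\ deciding which overlaps of stars must be forbidden, since this is what fixes the final constant. The count above produces $2\bd^2+1$ when the forbidden overlaps are those of positive one-dimensional measure, i.e.\ a common edge; if one instead insisted that the closed stars be literally point-disjoint, then an endpoint of $e'$ could sit at graph-distance two from $e$, the relevant vertex set would grow from $U_e$ to its closed $1$-neighbourhood, and the degree bound would enlarge accordingly. The correct reading is dictated by the use of the lemma in \secref{low}, where the stars will carry mutually orthogonal trial functions and only overlaps of positive measure are harmful; I would settle this interpretation first, after which the counting is routine.
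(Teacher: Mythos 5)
Your argument is correct and is essentially the paper's own: the authors also colour the edges greedily in an arbitrary order, asserting without further detail that the star of each new edge ``can have common edges with no more than $2\bd^2$ stars of the previously colored edges,'' which is precisely the degree bound $\deg_{\GH}(e)\le 2\bd^2$ you establish for the conflict graph before invoking $(M+1)$-colourability. Your resolution of the interpretive point is also the intended one --- the paper's proof speaks of \emph{common edges}, and the application in Lemma 5.2 only needs the trial functions supported on the stars to overlap in a set of measure zero --- so your count, and the constant $2\bd^2+1$, match the paper exactly.
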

\begin{proof}Order the edges in $\CE$ in an arbitrary way.
We must color the edges in $2\bd^2+1$ colors so that the stars of the edges of the same
color are disjoint. Suppose that we have already colored all edges
$e_k, k<n$. The star of  $e_n$ can have common edges with
no more than $2\bd^2$ stars of the previously colored edges. So we apply the unused color to $e_n$.
\end{proof}

Now we are in a position to give some lower estimates for our original
operator $\BB_V$. Similarly to \thmref{snizu},
they require  additional conditions: namely, we assume  that the edge lengths $l_e$
are bounded and separated from zero:
\begin{equation}\label{frombelow.1}
   0<l_-\le l_e\le l_+,\qquad \forall e\in\CE,
\end{equation}
and also that \eqref{deg} is satisfied. Note that the right inequality in \eqref{frombelow.1} coincides with \eqref{fromabove}, and that the
left inequality implies the condition \eqref{g_e}
for the combinatorial graph $G(\G)$. We also note that for the graphs satisfying
\eqref{frombelow.1} the (quasi-)norms of the sequences $\pmb\y_V$, $\pmb\vark_V$ in
any space $\ell^q$, $\ell^q_w$ are equivalent to each other.

\begin{lem}\label{lower} Suppose the conditions \eqref{deg} and \eqref{frombelow.1}
are satisfied. Then
 $1^\circ$ If the operator $\BB_V$ is bounded,
then $\bsymb\y_V\in\ell_\infty$ and
\[ \|\BB_V\|\ge c\|\bsymb\y_V\|_\infty.\]
$2^\circ$ If $\BB_V$ is compact, then the sequence $\bsymb\y_V$
tends to zero. Moreover, there exist constants $c',c''$ depending
on $\bd$ and on $l_+/l_-$, such that
\begin{equation}\label{2b}
    n(s,\BB_V)\ge c'\nu(c''s,\bsymb\y_V), \qquad\forall s>0.
\end{equation}
\end{lem}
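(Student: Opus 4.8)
The plan is to prove both parts at once by a single variational construction, in the spirit of \lemref{snizu} but carried out on the full space $\CH^1(\G)$ rather than on the Dirichlet part $\CH^1_\CD$. For each edge $e=(v,v')$ I would introduce a test function $\psi_e\in H^1_{\comp}(\G)$, supported in the star $\CS(e)=\CS(v)\cup\CS(v')$, defined by $\psi_e\equiv1$ on $e$, by letting $\psi_e$ decrease linearly from $1$ to $0$ along each of the remaining edges of $\CS(e)$, and by setting $\psi_e=0$ elsewhere. This function is continuous (it takes the value $1$ at both $v$ and $v'$), compactly supported and piecewise linear, hence a legitimate element of $\CH^1(\G)$. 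Since $\psi_e'=0$ on $e$ itself, its energy is paid only on the at most $2\bd-2$ neighbouring edges $\tilde e$, each contributing $\int_{\tilde e}|\psi_e'|^2=l_{\tilde e}^{-1}\le l_-^{-1}$ by \eqref{frombelow.1} and \eqref{deg}; thus $\ba_\G[\psi_e]\le 2\bd\,l_-^{-1}$. On the other hand $\bb_V[\psi_e]\ge\int_e V\,dy\ge l_+^{-1}\y_V(e)$, so the Rayleigh quotient satisfies $\bb_V[\psi_e]/\ba_\G[\psi_e]\ge c\,\y_V(e)$ with $c=l_-(2\bd l_+)^{-1}$.

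Part $1^\circ$ then follows at once: taking the supremum over $e$ gives $\|\BB_V\|\ge c\,\sup_e\y_V(e)=c\,\|\bsymb\y_V\|_\infty$, so boundedness of $\BB_V$ forces $\bsymb\y_V\in\ell_\infty$ with the stated estimate. For $2^\circ$ I would set $c''=1/c=2\bd\,l_+/l_-$, so that every edge in $E=\{e:\y_V(e)>c''s\}$ gives a trial function with Rayleigh quotient strictly larger than $s$. By \lemref{coloring} the edge set splits into at most $2\bd^2+1$ classes within which the stars $\CS(e)$ are pairwise disjoint; hence one class meets $E$ in at least $(2\bd^2+1)^{-1}\nu(c''s,\bsymb\y_V)$ edges. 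For two edges $e,e'$ of the same class the supports of $\psi_e$ and $\psi_{e'}$ lie in disjoint stars, so these functions are mutually orthogonal both in the metric $\ba_\G$ and with respect to the form $\bb_V$. Consequently, on their linear span $\CL$ the quotient $\bb_V[\varf]/\ba_\G[\varf]$ is a weighted average of the individual quotients and therefore exceeds $s$; the variational principle yields $n(s,\BB_V)\ge\dim\CL\ge(2\bd^2+1)^{-1}\nu(c''s,\bsymb\y_V)$, which is \eqref{2b} with $c'=(2\bd^2+1)^{-1}$. Finally, if $\BB_V$ is compact then $n(s,\BB_V)<\infty$ for every $s>0$, and \eqref{2b} forces $\nu(c''s,\bsymb\y_V)<\infty$ for all $s$, i.e.\ $\bsymb\y_V\to0$.

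The only genuinely delicate point — and the reason the Dirichlet operators $\BB_{V,e,\CD}$ of \lemref{lD} cannot by themselves deliver a bound in terms of $\y_V$ — is that the mass $\int_e V$ may be concentrated anywhere on the edge, in particular near a vertex, where any Dirichlet trial function is small. Taking $\psi_e$ equal to $1$ on the entire edge, and hence nonzero at the vertices (so that it lives in the full space $\CH^1(\G)$, not in $\CH^1_\CD$), is precisely what captures the whole of $\int_e V$ at a uniformly bounded energy cost. All three geometric hypotheses enter exactly here: $l_e\le l_+$ to pass from $\int_e V$ to $\y_V(e)$, and $\deg(v)\le\bd$ together with $l_e\ge l_-$ to bound the energy spent on the tapering neighbour edges.
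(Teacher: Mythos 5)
Your construction is exactly the paper's: the function $\psi_e$ you describe is precisely the unique piecewise-linear $\varf_e\in\CH^1_{\pl}(\G)$ equal to $1$ on $e$ and vanishing outside $\CS(e)$, with the same energy bound of order $\bd\,l_-^{-1}$, the same lower bound $l_+^{-1}\y_V(e)$ for $\bb_V[\psi_e]$, and the same use of Lemma~\ref{coloring} plus orthogonality of functions with disjoint stars in both forms to conclude via the variational principle. The argument is correct and coincides with the paper's proof up to inessential constants.
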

\begin{proof}
The reasoning follows the scheme repeatedly used in the literature, see,
e.g., \cite{BS, NaiSol02, RS09}. Let $e=(v,v')$ be an edge.
Take a function $\varf_e\in\CH^1_{pl}(\G)$, such that $\varf_e(y)=1$ for
$y\in e$ and $\varf_e(y)=0$ outside the set
$\CS(e)=\CS(v)\cup\CS(v')$. Such function is unique, and
$\int_{\G}|\varf'_e|^2dy\le 2(\bd-1)l_-^{-1}$. Moreover,
$\int_{\G}V|\varf_e|^2dy\ge l_+^{-1}\y_V(e)$, and hence,
\begin{equation}\label{3c}
    \frac{\int_{\G}V|\varf_e|^2dy}{\int_{\G}|\varf_e'|^2dx}\ge (2\bd-2)^{-1}l_-l_+^{-1}\y_V(e).
\end{equation}
The statement $1^\circ$ immediately follows.

Further, if for two edges $e_1,e_2$ the sets $\CS(e_1),\CS(e_2)$
are disjoint, then the corresponding functions
$\varf_{e_1},\varf_{e_2}$ are orthogonal both in $\CH^1(\G)$ and in the
space $L^2$ with the weight $V$. Let us say that a subset
$F\subset\CE$ is nice, if the sets $\CS(e),\ e\in F$, are mutually
disjoint. By restricting the \qf $\bb_V$ onto the linear span of
the functions $\varf_e, e\in F$, we obtain an operator whose
eigenvalues, up to the ordering, are exactly the numbers in the
left-hand side of \eqref{3c}. Therefore, the numbers $\y_V(e)$,
listed in the decreasing order, do not exceed the eigenvalues
$\l_n(\BB_V)$ (or even, $\l_n(\BB_{V,\pl})$), multiplied by
$2(\bd-1)l_+l_-^{-1} $. By Lemma ~\ref{coloring}  the set
$\CE$ can be split into no more than $2\bd^2+1$  nice subsets,
 and this leads to the
statement $2^\circ$.
\end{proof}
 Note that a similar lower estimate for the operator
$\BB_{V,\CD}$ (in place of $\BB_V$) does not hold.

\subsection{The operator $\BB_V$}\label{whole}
Now we easily obtain the following result for our original operator $\BB_V$ in the space $\CH^1(\G)$,
generated by the quadratic form \eqref{2}. We compare it with the
`discrete' operator  $\BB_{\pmb{\vark}_V}$ in the space
$\CH(G),\ G=G(\G)$, where the discrete potential $\pmb{\vark}_V$ is defined by \eqref{4}.

\begin{thm}\label{wh}
Let $\G$ be a metric graph satisfying the conditions \eqref{deg} and
\eqref{frombelow.1}. Suppose also that $D>2$.
Then the operator $\BB_V$ is bounded (resp., compact,  lies in one of the
classes $\GS_q,\ \GS_{q,w}$ with $q>1/2$) if and only if the
following two conditions are satisfied.

$1^\circ$ The operator $\BB_{\pmb{\vark}_V}$ belongs to the corresponding class;

$2^\circ$ The sequence ${\pmb{\vark}_V}$ belongs to
$\ell^\infty$, resp., its subspace of sequences tending to zero,
$\ell^q$, or $\ell^q_w$.

If $q<D/2$, the condition $1^\circ$ follows from $2^\circ$ and thus, can be
removed.
\end{thm}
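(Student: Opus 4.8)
The plan is to study $\BB_V$ through the two-sided bound \eqref{2m} together with the remarks preceding it, which reduce boundedness, compactness, and membership in $\GS_q$ or $\Sg_q$ of $\BB_V$ to the simultaneous validity of the same property for the two summands $\BB_{V,\pl}$ and $\BB_{V,\CD}$. Thus I would control the piecewise-linear part $\BB_{V,\pl}$ (unitarily equivalent to $\wh\BB_{V,\pl}$ on $\CH(G)$) by the discrete operator $\BB_{\pmb{\vark}_V}$, and the Dirichlet part $\BB_{V,\CD}$ by the sequence $\bsymb\y_V$. Throughout I use the fact, recorded before \lemref{lower}, that under \eqref{frombelow.1} the (quasi-)norms of $\bsymb\y_V$ and $\pmb{\vark}_V$ in each of $\ell^\infty,\ell^q,\ell^q_w$ are equivalent and $\bsymb\y_V\to0$ iff $\pmb{\vark}_V\to0$; this allows condition $2^\circ$ to be read interchangeably on either sequence.

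The sufficiency (``if'') direction is then straightforward. Condition $1^\circ$ places $\BB_{\pmb{\vark}_V}$ in the class at hand, and \lemref{lpl} transfers this to $\BB_{V,\pl}$ via \eqref{6}. Condition $2^\circ$, rephrased through $\bsymb\y_V$, feeds \lemref{lD}: for $q>1/2$ it yields $\BB_{V,\CD}\in\GS_q$ (resp.\ $\Sg_q$), while $\bsymb\y_V\in\ell^\infty$ gives boundedness and $\bsymb\y_V\to0$ gives compactness. With both summands in the class, the right-hand inequality in \eqref{2m} puts $\BB_V$ there as well.

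The core of the proof is the necessity direction, and in particular the recovery of $1^\circ$. Condition $2^\circ$ comes for free: the left-hand inequality in \eqref{2m} and the lower bound \eqref{2b} of \lemref{lower} force $\bsymb\y_V\in\ell^\infty$ (boundedness), $\bsymb\y_V\to0$ (compactness), or $\bsymb\y_V\in\ell^q$ resp.\ $\ell^q_w$ ($\GS_q$ resp.\ $\Sg_q$), which by the norm equivalence is $2^\circ$. For $1^\circ$ I would prove a reverse form of \eqref{est1}. On an edge $e=(v,v')$ set $a=f(v),\ b=f(v')$; from $Jf(y)=a+(b-a)t$ one has $|a|^2+|b|^2\le 4|(Jf)(y)|^2+4|a-b|^2$ for every $y\in e$, and integrating against $V$, using $\int_e V\,dy=g_e\y_V(e)$ and summing over $\CE$, gives
\begin{equation*}
    \bb_{\pmb{\vark}_V}[f]\le 4\,\wh{\bb}_V[f]+4\,r[f],\qquad r[f]:=\sum_{e=(v,v')}\y_V(e)\,g_e\,|f(v)-f(v')|^2 .
\end{equation*}
The correction $r$ is the quadratic form on $\CH(G)$ of the operator $R=d^{*}M_{\bsymb\y_V}d$, where $d\colon\CH(G)\to\ell^2(\CE,g)$, $(df)(e)=f(v)-f(v')$, is by \eqref{form} an isometry onto its range ($\ba_G[f]=\|df\|^2$) and $M_{\bsymb\y_V}$ is multiplication by $\y_V(e)$. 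Hence $s_n(R)\le s_n(M_{\bsymb\y_V})$, so $R$ belongs to exactly the class dictated by the membership of $\bsymb\y_V$ already secured above. Since $\wh\BB_{V,\pl}$ lies in the class by \eqref{2m}, the operator inequality $\BB_{\pmb{\vark}_V}\le 4(\wh\BB_{V,\pl}+R)$ and the variational principle, $n(s,\BB_{\pmb{\vark}_V})\le n(s/8,\wh\BB_{V,\pl})+n(s/8,R)$, place $\BB_{\pmb{\vark}_V}$ in the class, which is $1^\circ$.

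I expect the reverse comparison to be the main obstacle: \eqref{est1} has no pointwise converse, since when $f(v)\approx-f(v')$ the interpolant $Jf$ nearly vanishes on $e$ while $\pmb{\vark}_V$ stays large, so the defect must be charged to the coboundary $df$ and absorbed by the Dirichlet data $\bsymb\y_V$; the delicate point is to check that $R$ really lies in the target class in each of the bounded, compact, $\GS_q$, and $\Sg_q$ cases. The final assertion, that $1^\circ$ is redundant when $q<D/2$, is then immediate, since in that range \thmref{thm1} already derives $\BB_{\pmb{\vark}_V}\in\GS_q$ (resp.\ $\Sg_q$) from $\pmb{\vark}_V\in\ell^q$ (resp.\ $\ell^q_w$).
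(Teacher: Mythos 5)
Your proof is correct, and it follows the same overall architecture as the paper's (reduction via the decomposition \eqref{2m}, Lemmas \ref{lD} and \ref{lpl} for sufficiency, Lemma \ref{lower} for the necessity of $2^\circ$, and \thmref{thm1} for the final claim). The genuine difference is in the necessity of $1^\circ$: the paper disposes of it in one sentence, asserting that membership of $\BB_V$ passes to $\BB_{\pmb{\vark}_V}$ ``due to the first inequality in \eqref{2m}''. But that inequality only yields membership of $\BB_{V,\pl}\cong\wh\BB_{V,\pl}$, and the comparison \eqref{est1} goes the wrong way ($\wh\bb_V\le\bb_{\pmb{\vark}_V}$), so an extra argument is really required --- exactly the reverse inequality you supply, $\bb_{\pmb{\vark}_V}[f]\le 4\,\wh\bb_V[f]+C\,r[f]$ with the defect form $r[f]=\sum_e \y_V(e)g_e|f(v)-f(v')|^2$ dominated, via the coboundary isometry and the already-established condition $2^\circ$, by the diagonal operator with symbol $\bsymb\y_V$ in the target class. (The fragment of a cross-term computation left dangling after the bibliography in the source suggests the authors had drafted precisely such a comparison and it was excised from the final text.) Your elementary inequality $|a|^2+|b|^2\le 4|a(1-t)+bt|^2+4|a-b|^2$ is easily checked (indeed $2|a-b|^2$ suffices), the singular-value bound $s_n(R)\le s_n(M_{\bsymb\y_V})$ is valid since $d$ is an isometry of $\CH(G)$ into $\ell^2(\CE,g)$, and the Ky Fan counting inequality then closes the argument for boundedness, compactness, $\GS_q$ and $\Sg_q$ alike. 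So your write-up is not merely correct: it fills in the one step the published proof leaves implicit.
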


Note that the last statement follows from the upper estimates for the combinatorial graphs,
see \thmref{thm1}, $2^\circ$.

\begin{rem}In particular,   $\BB_V\in\GS_1$ if, and under assumptions \eqref{frombelow.1} and \eqref{deg}, only if
$\int_{\G}Vdy<\infty$.\end{rem}
\begin{proof}
Part `if' follows from Lemmas \ref{lD} and \ref{lpl}, due to the
second inequality in \eqref{2m}.

Part `only if': if $\BB_V$ possesses one of the properties mentioned
in the assumption, then the same is true for the operator
$\BB_{\pmb{\vark}_V}$ due to the first inequality in \eqref{2m}. The
condition 2$^\circ$ is fulfilled by Lemma \ref{lower}.
\end{proof}\vs

This result shows that the spectral properties of the operator $\BB_V$ are basically determined
by the ones for its discrete analogue. It is worth noting that the result for $q>D/2$ should be considered
as `conditional': indeed, our results for general combinatorial graphs (\thmref{thm1}) concern only the case $q\le D/2$. For more advanced results, one needs an additional information about the structure of $G$. For the important case $G=\Z^d,\ d\ge3$, such results were obtained in \cite{RS09}.

In particular,  a construction of `sparse potentials' was suggested there, that allows one to construct a discrete potential producing an operator $\BB_V$ with an arbitrary
prescribed asymptotic behavior of the spectrum.
In this connection, we note that this construction extends to arbitrary combinatorial
graphs with
$D>2$. This material will be presented elsewhere.

\thmref{wh} does not include the borderline case $q=1/2$. For this case, a simple sufficient
condition for
$\BB_V\in\GS_{1/2,w}$ can be given.
\begin{thm}\label{1/2}
Let $\bsymb\y_V\in\ell^{1/2}$. Then
\begin{equation}\label{estw}
    \|\BB_V\|_{\Sg_{1/2}}\le C\|\bsymb\y_V\|_{\ell^{1/2}},
\end{equation}
or, equivalently,
\[ N_-(\BA-\a V)\le C\a^{1/2}\sum_{e\in\CE}\bsymb\y_V(e)^{1/2}.\]
The Weyl-type asymptotic formula
\begin{equation}\label{weyl}
    N_-(\BA-\a V)\sim \frac{\a^{1/2}}{\pi}\int_\G\sqrt{V}dy
\end{equation}
is valid.
\end{thm}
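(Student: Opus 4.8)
The plan is to split $\BB_V$ through the $\ba_\G$-orthogonal decomposition \eqref{1a} into the decoupled part $\BB_{V,\CD}$ and the piecewise-linear part $\BB_{V,\pl}$, and to exploit the two-sided bound \eqref{2m}. The guiding observation is that for the borderline exponent $q=1/2$ the entire leading order is produced by $\BB_{V,\CD}$, while $\BB_{V,\pl}$ is asymptotically negligible. Throughout I work under the standing assumptions \eqref{deg} and \eqref{frombelow.1} of \thmref{wh}; in particular the $\ell^{1/2}$-quasinorms of $\bsymb\y_V$ and $\pmb\vark_V$ are equivalent, so $\bsymb\y_V\in\ell^{1/2}$ forces $\pmb\vark_V\in\ell^{1/2}$.

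First I would establish the quasinorm bound \eqref{estw}. For the decoupled part \lemref{lD}, $3^\circ$, gives directly $\|\BB_{V,\CD}\|_{\Sg_{1/2}}\le C\sum_{e}\sqrt{\y_V(e)}$. For the piecewise-linear part I pass to the associated discrete operator: by \lemref{lpl} the class membership and quasinorm of $\BB_{V,\pl}$ are controlled by those of $\BB_{\pmb\vark_V}$, and since $\pmb\vark_V\in\ell^{1/2}$, \thmref{thm1}, $2^\circ$, applied with $q=1/2<D/2$ yields $\BB_{\pmb\vark_V}\in\GS_{1/2}\subset\Sg_{1/2}$ and hence $\|\BB_{V,\pl}\|_{\Sg_{1/2}}\le C\sum_{e}\sqrt{\y_V(e)}$. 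Feeding both pieces into the second inequality of \eqref{2m} and using the quasi-triangle inequality of $\Sg_{1/2}$ gives \eqref{estw}. The equivalent statement for $N_-$ follows at once from $N_-(\BA-\a V)=n(1/\a,\BB_V)$ together with the relation $\|\cdot\|_{\Sg_{1/2}}=\sup_{s}s^{1/2}n(s,\cdot)$.

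For the Weyl law \eqref{weyl} I argue with counting functions as $s\to0$. \lemref{lD}, $3^\circ$, supplies the leading term $s^{1/2}n(s,\BB_{V,\CD})\to\frac1\pi\int_\G\sqrt V\,dy$, while the inclusion $\BB_{V,\pl}\in\GS_{1/2}$ just obtained gives $n(s,\BB_{V,\pl})=o(s^{-1/2})$ (a summable $\sum s_n^{1/2}$ forces $s_n=o(n^{-2})$). The lower bound is immediate from the first inequality in \eqref{2m}: restricting the Rayleigh quotient to $\CH^1_\CD$ gives $n(s,\BB_V)\ge n(s,\BB_{V,\CD})$, hence $\liminf_{s\to0}s^{1/2}n(s,\BB_V)\ge\frac1\pi\int_\G\sqrt V\,dy$. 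For the matching upper bound the crude factor $2$ in \eqref{2m} is too lossy; instead, for each $\e>0$ the elementary inequality $\bb_V[\varf]\le(1+\e^{-1})\bb_V[\varf_\pl]+(1+\e)\bb_V[\varf_\CD]$ together with the variational principle yields $n(s,\BB_V)\le n(s/(1+\e^{-1}),\BB_{V,\pl})+n(s/(1+\e),\BB_{V,\CD})$. Multiplying by $s^{1/2}$ and letting $s\to0$ with $\e$ fixed, the first summand vanishes and the second tends to $(1+\e)^{1/2}\frac1\pi\int_\G\sqrt V\,dy$; letting $\e\to0$ closes the gap, and $N_-(\BA-\a V)=n(1/\a,\BB_V)$ converts this into \eqref{weyl}.

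The main obstacle is this last asymptotic-perturbation step. One must load the large weight $1+\e^{-1}$ onto the negligible component $\BB_{V,\pl}$ and only the weight $1+\e$ onto the leading component $\BB_{V,\CD}$, and then verify that for each fixed $\e$ the term $n(s/(1+\e^{-1}),\BB_{V,\pl})$ is still $o(s^{-1/2})$ — which is exactly what $\BB_{V,\pl}\in\GS_{1/2}$ guarantees. The other delicate point is the reduction of $\BB_{V,\pl}$ to the discrete operator $\BB_{\pmb\vark_V}$, where the lower length bound in \eqref{frombelow.1} is precisely what keeps $\pmb\vark_V$ in $\ell^{1/2}$.
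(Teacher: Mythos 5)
Your proposal is correct and follows essentially the same route as the paper: the decoupled part $\BB_{V,\CD}$ carries the bound and the Weyl asymptotics via Lemma~\ref{lD}, $3^\circ$, while $\BB_{V,\pl}$ is reduced through $\pmb{\vark}_V\in\ell^{1/2}$ to the discrete operator and lands in $\GS_{1/2}\subset\Sg_{1/2}^{(0)}$, hence is asymptotically negligible. You merely make explicit the standard $\e$-weighted splitting of the quadratic forms that the paper leaves implicit in the phrase ``it does not contribute to the asymptotics of order $1/2$''.
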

\begin{proof}
By Lemma \ref{lD}, the estimate \eqref{estw} and the asymptotics \eqref{weyl} hold for the operator $\BB_{V,\CD}$. Since $\pmb{\vark}_V$ lies in $\ell^{1/2}$ together with $\bsymb{\y}_V$,
the operator $\BB_{V,\pl}$ belongs to $\GS_{1/2}$, and thus to $\Sg_{1/2}^{(0)}$,
with the corresponding estimate for its quasi-norm. Hence, it
it does not contribute to the asymptotics of order $1/2$. Both statements of Theorem follow
from this fact.
\end{proof}

A necessary and sufficient condition for the validity of \eqref{estw} and \eqref{weyl}
 can be obtained by analogy with
\cite{NaiSol97}.  We do not present it here.

\end{document}